\numberwithin{equation}{section} 
\def\<{\langle}             \def\>{\rangle}
\newtheorem{thm}{Theorem}[section]
\newtheorem{lem}[thm]{Lemma}
\theoremstyle{definition}
\newtheorem{rem}{Remark}[section]
\newcommand{\beeq}{\begin{equation}}\newcommand{\eneq}{\end{equation}}
\newcommand{\al}{\alpha} 
\newcommand{\gt}{\gtrsim}
   \newcommand{\be}{\beta}
\newcommand{\ep}{\varepsilon}
    \newcommand{\la}{\lambda}
\newcommand{\om}{\omega}    
    \newcommand{\Ga}{\Gamma}
\newcommand{\R}{\mathbb{R}}
\newcommand{\N}{\mathbb{N}}
\newcommand{\Sp}{\mathbb{S}}
\def \endprf{\hfill  {\vrule height6pt width6pt depth0pt}\medskip}
\newcommand{\pa}{\partial}
\newcommand{\les}{{\lesssim}}
\newcommand{\supp}{\,\mathop{\!\mathrm{supp}}}
\newcommand{\gm}{\mathfrak{g}}
\numberwithin{equation}{section}
\title[Finite time blow up for systems of wave equations with multiple speeds]
      {Finite time blow up for systems of wave equations with multiple speeds posed on asymptotically Euclidean manifold}
\author{Mengyun Liu}
\address{Department of Mathematics\\                	
Zhejiang Sci-Tech University\\                Hangzhou 310018, P. R. China}
\email{mengyunliu@zstu.edu.cn}
\thanks{The author was supported by NSFC 12101558 and NSF of Zhejiang province LQ22A010016}
\date{\today}
\dedicatory{} \commby{}
\begin{document}
\begin{abstract}
In this work, we study the finite time blow-up phenomenon of three types of semilinear wave systems with multiple speeds, posed on asymptotically Euclidean manifolds. We establish the upper bound estimates for the lifespan of solutions when the spatial dimension $n\geq 2$. In particular, for system related to the Glassey conjecture, we obtained the finite time blow up results under a new curve. This new curve is sharp at least for $n=3$. 
 \end{abstract}

\keywords{multiple speeds, blow up, lifespan estimates, asymptotically Euclidean manifolds}

\subjclass[2010]{
58J45, 58J05, 35L71, 35B40, 35B33, 35B44, 35B09, 35L05}

\maketitle

\section{Introduction}
In this paper, we study the finite time blow up phenomenon of three kinds of semilinear wave systems with multiple speeds, posed on asymptotically Euclidean (Riemannian) manifold $(\R^{n}, \gm)$ with $n\ge 2$. Before preceding, we give the definition of asymptotically Euclidean. We mean that
$(\R^n, \gm)$ is certain perturbation of the Euclidean space $(\R^n, \gm_0)$. More precisely, we assume
$\gm$ can be decomposed as 
\begin{align}
\label{adl1}
\gm=\gm_{1}+\gm_{2}\ ,
\end{align}
where $\gm_{1}$ is a spherical symmetric, long range perturbation of $\gm_0$, and $\gm_{2}$ is an exponential (short range) perturbation.
By definition, there exists polar coordinates $(r,\omega)$ for $(\R^n,\gm_1)$, in which we can write
\beeq
\label{adl3}
\gm_{1}=K^{2}(r)dr^{2}+r^{2}d\omega^{2}\ , 
\eneq
where $d\om^2$ is the standard metric on the unit sphere $\Sp^{n-1}$, and
\beeq
\label{adl2}
|\pa^{m}_{r}(K-1)|\les  \<r\>^{-m-\rho}
,  m=0, 1, 2.
\eneq
for some given constant $\rho >0$. 
Here and in what follows, 
$\langle x\rangle=\sqrt{1+|x|^2}$, and
we use $A\les B$ ($A\gtrsim B$) to stand for $A\leq CB$ ($A\geq CB$) where the constant $C$ may change from line to line. 
Equipped with the coordinates $x=r\omega$, we have
$$\gm=g_{jk}(x)dx^j dx^k\equiv \sum^{n}_{j,k=1}g_{jk}(x)dx^j dx^k\ ,\ \gm_2=g_{2, jk}(x)dx^j dx^k\ ,$$
where we have used the convention that Latin indices $j$, $k$ range from $1$ to $n$ and the Einstein summation convention for repeated upper and lower indices. 
Concerning $\gm_2$, we assume
it is an exponential (short range) perturbation of $\gm_1$, that is, there exists $\be>0$ so that
\beeq
\label{dlfjia}
|\nabla g_{2,jk}|+|g_{2,jk}|\les e^{-\be\int^{r}_{0}K(\tau)d\tau},\ 
|\nabla^2 g_{2,jk}|\les 1\ .
\eneq
By asymptotically Euclidean and Riemannian assumption, it is clear that 
there exists a constant $\delta_{0}\in(0, 1)$ such that 
\beeq
\label{unelp}
\delta_{0}|\xi|^2\le
g^{jk}\xi_{j}\xi_{k}\le
\delta_{0}^{-1} |\xi|^2, \forall \ \xi\in\R^{n},\ 
K\in (\delta_{0}, 1/\delta_{0})
\ .
\eneq

 The first kind of system that we studied is semilinear wave equation related to the Strauss conjecture. More precisely,  we consider the following system with multiple speeds posed on asymptotically Euclidean manifolds \eqref{adl1}-\eqref{unelp}.
\beeq
\label{2}
\begin{cases}
\pa^{2}_t u-c^2\Delta_{\gm} u=|v|^p \ , (t, x)\in (0, T)\times \R^n\ ,\\
\pa^{2}_t v-\Delta_{\gm} v=|u|^q\ ,\\
u(0, x)=\ep u_0(x), \ u_t(0, x)=\ep u_1(x)\\
v(0, x)=\ep v_0(x), \ v_t(0, x)=\ep v_1(x)\\
\end{cases}
\eneq
Here
$1<p,q\in\R$, $0<c\in \R$, $(u_0, u_1), (v_0, v_1)\in C^\infty_c(\R^n)$ and the small parameter $\ep>0$ measures the size of the data. As usual, to show blow up, we assume  both $(u_0, u_1)$ and $(v_0, v_1)$ are nontrivial, nonnegative and supported in $B_R:= \{x\in\R^n: |x|=r\le R\}$ for some $R>0$. It has been well investigated that the critical exponet $p_S(n)$ to Strauss problem
$$\pa^{2}_t u-\Delta u=|u|^p\ ,$$
is the positive root of the equation
$$(n-1)p^{2}-(n+1)p-2=0
\ .$$
As a natural generalization, there is interest in determining whether a critical curve exists for the corresponding system \eqref{2}. 

When $c=1$ and $\gm=\gm_0$, the critical curve has been well studied by Santo-Georgiev-Mitidieri \cite{SGM}. They showed that the curve
$$\Ga_{SS}(p,q, n):=\max\{\frac{p+2+q^{-1}}{pq-1}\ ,\frac{q+2+p^{-1}}{pq-1}\}-\frac{n-1}{2}=0\ , n\geq 2\ ,$$
plays the same role of $p_S(n)$ to Strauss problem at least in a neighborhood of $(p_S(n), p_S(n))$. More precisely, they obtained the finite time blow up results when $\Ga_{SS}(p,q,n)>0$, $n\geq 1$ and the global existence when $\Ga_{SS}(p, q, n)<0\ ,n\geq 2$ under some additional restriction on $p, q$. See also \cite{S} for case $n=3$ and blow up result in \cite{Deng}. For the critical case $\Ga_{SS}(p,q,n)=0$, it has also been proved there is no global existence in low space dimensions. See \cite{SM}, \cite{AKT}, \cite{KO} \cite{KO1} for $n=3$ and  \cite{KO1} for $n=2$. 
Recently, Ikeda-Sobojima-Wakasa \cite{I-S-W} obtained upper bound estimates of the lifespan when
$\Ga_{SS}(p, q, n)\geq 0\ , n\geq 2\ $.

When the propagation speeds are different, that is $c\neq 1$, the existing results are all in low dimensional Euclidean space $\gm=\gm_0$. If $p\leq q$, it is easy to see that
$$\Ga_{SS}(p, q, n)=\frac{q+2+p^{-1}}{pq-1}-\frac{n-1}{2}=0\ ,$$
in which case Kubo-Ohta \cite{KO}, \cite{KO2} proved that the critical curve is still 
$\Ga_{SS}(p, q, n)=0$ in low dimensions $n=2, 3$. More precisly, they obtained the global existence when $\Ga_{SS}(p, q, n)<0$ and the finite time blow up results when $\Ga_{SS}(p,q, n)\geq 0$. To the best of the authors' knowledge, there is no results for higher dimensions. In this paper, we obtain the finite time blow up results with $\Ga_{SS}(p,q,n)>0, n\geq 2$ as well as upper bound estimates of the lifespan, to system \eqref{2}, for any $c>0$, posed on asymptotically Euclidean manifolds.

For the strategy of proof, we turn the systems into ODE systems by defining suitable functionals. For those ODE systems, we apply iteration methods to give upper bound estimates of the lifespan. The main innovation  is 
 the existence of  a class of  generalized  ``eigenfunctions" for 
 the Laplace-Beltrami operator,
 $\Delta_{\gm}\phi_\la=\lambda^{2}\phi_\la$, with small parameter $\lambda\in (0,\la_0)$ and desired asymptotical behavior, see Lemma \ref{elp}. As usual, to show blow up, we need the solution to satisfy finite speed of propagation. For the linear equation $\pa^{2}_{t}u-\Delta_{\gm_1}u=0$, with compacted initial data, the support of solution $u$ satisfies 
\beeq
\label{kl1}
\supp u\subset \{(t, x); \int^{|x|}_{0}K(\tau)d\tau\leq t+R_{1}\}\ .
\eneq
As $\gm_2$ is short-range perturbation, which does not affect speed of propagation too much, we still have \eqref{kl1} to $\pa^{2}_t u-\Delta_{\gm} u=0$, for some $ R_{1}\geq\int_{0}^{R_{0}}K(\tau)d\tau$. Hence we assume the support of the solutions to system \eqref{2} satisfies
 \begin{align}
\label{fspyl}
\begin{cases}
\supp u\subset D_1=: \{(t, x); \int^{|x|}_{0}K(\tau)d\tau\leq ct+R_{1}\}\ ,\\
\supp v\subset D_2=: \{(t, x); \int^{|x|}_{0}K(\tau)d\tau\leq t+R_{1}\}\ .
\end{cases}
\end{align}

\begin{thm}
\label{thm4}
Let $n\geq 2$, $0<c\in \R$ and $\Ga_{SS}(p, q, n)> 0$. Consider the system \eqref{2} posed on asymptotically Euclidean manifolds \eqref{adl1}-\eqref{unelp}.
 Assuming that
the data are nontrivial and satisfying 
 \begin{align}
 \label{73}
 \begin{split}
 \begin{cases}
 \int u_i dv_{\gm}>0\ ,  \int_{\R^n} u_0\phi_{\la}(x)d v_{\gm}>0\ , \int_{\R^n} (u_1-c\la u_0)\phi_{\la}(x)d v_{\gm}>0\ ,\\
\int v_i dv_{\gm}>0\ , \int_{\R^n} v_0\phi_{\la}(x)d v_{\gm}>0\ , \int_{\R^n} (v_1-\la v_0)\phi_{\la}(x)d v_{\gm}>0\ , 
\end{cases}
\end{split}
 \end{align}
 where $i=0, 1$. 
 Suppose it has a weak solution $u, v\in C^{2}([0, T_\ep);\mathcal{D}'(\R^{n}))$ with  $|v|^{p}, |u|^{q}\in C([0, T_\ep);\mathcal{D}'(\R^{n}))$ and satisfying \eqref{fspyl}. Then 
there exist a constant $\ep_0>0$, such that for any $\ep\in (0, \ep_{0})$, the solution will blow up at finite time. 
Moreover, we have the following estimate for the lifespan $T_\ep$
\begin{align*}
T_{\ep} \ \les \ 
\ep^{-\Ga_{SS}(p,q,n)^{-1}}\  .
\end{align*}
\end{thm}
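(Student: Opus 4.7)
The plan is to follow the test-function/iteration argument developed by Ikeda--Sobajima--Wakasa \cite{I-S-W} for the Euclidean case, adapting it to the multiple-speed system and to the asymptotically Euclidean geometry via the generalized eigenfunction $\phi_\la$ of Lemma~\ref{elp}.

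I would introduce the unweighted spatial averages
$$F_0(t)=\int_{\R^n} u(t,x)\, dv_\gm,\qquad G_0(t)=\int_{\R^n} v(t,x)\, dv_\gm,$$
and the weighted ones
$$F_1(t)=\int_{\R^n} u(t,x)\phi_\la(x)\, dv_\gm,\qquad G_1(t)=\int_{\R^n} v(t,x)\phi_\la(x)\, dv_\gm.$$
The compact support in \eqref{fspyl} together with the divergence theorem on $(\R^n,\gm)$ yield the clean identities
$$F_0''(t)=\int|v|^p\, dv_\gm,\qquad G_0''(t)=\int|u|^q\, dv_\gm,$$
while $\Delta_\gm \phi_\la=\la^2\phi_\la$ combined with integration by parts produces
$$F_1''-c^2\la^2 F_1=\int|v|^p\phi_\la\, dv_\gm,\qquad G_1''-\la^2 G_1=\int|u|^q\phi_\la\, dv_\gm.$$
The three positivity conditions in \eqref{73}, in particular $\int(u_1-c\la u_0)\phi_\la\, dv_\gm>0$ and its counterpart for $v$, are precisely what is required so that solving each homogeneous ODE via the integrating factors $e^{\pm c\la t}$ (respectively $e^{\pm \la t}$) yields $F_1(t)\ges\ep e^{c\la t}$ and $G_1(t)\ges\ep e^{\la t}$. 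Using the asymptotics of $\phi_\la$ from Lemma~\ref{elp} to compare $F_1,G_1$ with $F_0,G_0$ after choosing $\la\sim(t+R)^{-1}$ produces polynomial-in-$t$ seed lower bounds $F_0(t)\ges \ep(t+R)^{a_0}$, $G_0(t)\ges\ep(t+R)^{b_0}$.

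The iteration is then carried on $F_0,G_0$. H\"older's inequality, combined with the volume bound $|\supp u(t,\cdot)|_\gm\les(t+R)^n$ (from \eqref{fspyl} and \eqref{unelp}), gives
$$\int|v|^p\, dv_\gm\ges (t+R)^{-n(p-1)}G_0(t)^p,\qquad \int|u|^q\, dv_\gm\ges (t+R)^{-n(q-1)}F_0(t)^q,$$
and integrating $F_0''=\int|v|^p\, dv_\gm\geq 0$ twice yields the coupled integral system
$$F_0(t)\ges\int_0^t(t-s)(s+R)^{-n(p-1)}G_0(s)^p\, ds,$$
and analogously with $(p\leftrightarrow q)$, $(u\leftrightarrow v)$. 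Plugging in the seeds and iterating produces lower bounds of the form $F_0(t)\ges A_j\ep^{\al_j}(t+R)^{a_j}$, $G_0(t)\ges B_j\ep^{\be_j}(t+R)^{b_j}$ with recursions $a_{j+1}=pb_j+2-n(p-1)$, $b_{j+1}=qa_j+2-n(q-1)$, $\al_{j+1}=p\be_j$, $\be_{j+1}=q\al_j$. A fixed-point analysis of these recursions shows that the exponents diverge, with the multiplicative constants $A_j,B_j$ remaining controllable, precisely under the condition $\Ga_{SS}(p,q,n)>0$; balancing the resulting lower bound against $\ep$ yields $T_\ep\les\ep^{-1/\Ga_{SS}(p,q,n)}$.

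The principal obstacle lies in the two different propagation speeds: the natural exponential multiplier is $e^{-c\la t}$ for $u$ but $e^{-\la t}$ for $v$, so the weighted pair $(F_1,G_1)$ does not close into a single symmetric second-order system as in the equal-speed case. I would sidestep this by using $(F_1,G_1)$ only to establish polynomial seed bounds on $(F_0,G_0)$, and then running the full iteration on the unweighted pair, where the two speeds enter only through the constants $R_1$ and the implicit constants in the H\"older step. A secondary technical point is verifying that the long-range correction $\gm_1$ and the short-range correction $\gm_2$ do not spoil the integration-by-parts identities or the asymptotics of $\phi_\la$; the decay assumptions \eqref{adl2} with $\rho>0$ and the exponential bound \eqref{dlfjia} are tailored for these errors to be absorbable, but the estimates must hold uniformly in the small parameter $\la$ chosen at the seed step.
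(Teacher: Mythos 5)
Your overall architecture (eigenfunctions from Lemma \ref{elp}, H\"older against the test function, then a two-sequence iteration on the unweighted averages) matches the paper's, but the step that produces the seed lower bounds contains a genuine error. You propose to take $F_1(t)=\int u\phi_\la\, dv_\gm$, obtain $F_1\gtrsim \ep e^{c\la t}$, and then choose $\la\sim (t+R)^{-1}$ to extract a polynomial seed. With that choice of $\la$, on the support $\int_0^{|x|}K\le ct+R_1$ one has $\la r\les 1$, so the factor $\<\la r\>^{-\frac{n-1}{2}}$ in \eqref{1.50} is $\sim 1$ and $e^{\la\int_0^r K}\sim 1$: the test function degenerates to a constant, $\int_{D_1}\phi_\la^{q'}dv_\gm\sim (t+R)^n$, and H\"older returns only $\int|u|^q dv_\gm\gtrsim \ep^q(t+R)^{-n(q-1)}$. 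That is exactly what the trivial seed $F_0,G_0\gtrsim\ep(t+R)$ already gives when fed into the ODE system, so no improvement is gained and the iteration does not close on the full region $\Ga_{SS}(p,q,n)>0$. The whole point of the Yordanov--Zhang-type functional is that $\la$ stays \emph{fixed} in $(0,\la_0]$: the paper works with $H_1=\int_{D_1}u\,\psi_1\,dv_\gm$, $\psi_1=e^{-c\la t}\phi_\la$ (equivalently, your $e^{-c\la t}F_1\gtrsim\ep$), and uses Lemma \ref{le1}, $\int_{D_i}\psi_i^{p'}dv_\gm\les (t+1)^{n-1-\frac{n-1}{2}p'}$, where the exponential weight localizes the integral to the shell $r\sim t$ on which $\<\la r\>^{-\frac{n-1}{2}}\sim t^{-\frac{n-1}{2}}$ genuinely decays. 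This yields the improved seeds $F\gtrsim \ep^p(t+1)^{n+1-\frac{n-1}{2}p}$, $G\gtrsim\ep^q(t+1)^{n+1-\frac{n-1}{2}q}$, which are exactly what makes the recursion reach the Strauss-type curve. Replacing your parameter choice by a fixed $\la$ and carrying the factor $e^{-c\la t}$ (resp.\ $e^{-\la t}$) inside the functional repairs the step and reduces your argument to the paper's.

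Two smaller points. First, you cannot ``compare $F_1,G_1$ with $F_0,G_0$'' directly: $u,v$ need not remain nonnegative for $n\ge 4$, and $\phi_\la$ is unbounded above, so the comparison must be routed through the nonlinearity ($\ep\les H_2\les(\int_{D_2}|v|^p dv_\gm)^{1/p}(\int_{D_2}\psi_2^{p'}dv_\gm)^{1/p'}$, then integrate $F''=\int_{D_2}|v|^p dv_\gm$ twice), which is what the paper does. Second, your synchronous recursion $a_{j+1}=pb_j-n(p-1)+2$, $b_{j+1}=qa_j-n(q-1)+2$ should be staggered (the paper uses $b_j=qa_j-n(q-1)+2$ with $a_j=pb_{j-1}-n(p-1)+2$, started from $a_1=n+1-\frac{n-1}{2}p$ when $p\le q$, and symmetrically otherwise) so that the exponent that survives is the one realizing the $\max$ in $\Ga_{SS}$; and the control of the constants $A_j,B_j$, which you only assert, requires the logarithmic summation $\ln B_j\ge (pq)^{j-1}(\ln B_1-S(\infty))$ to convert $B_1\sim\ep^{pq}$ into the lifespan bound.
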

\begin{rem}
Kubo-Ohta \cite{KO}, \cite{KO2} proved that the critical curve is 
$\Ga_{SS}(p, q, n)=0$ in low dimensions $n=2, 3$ whenever $0<c\in\R$. Based on our result, it is very natural to conjecture that for any $c> 0$ and $n\geq 2$, the critical curve to \eqref{2} is $\Ga_{SS}(p, q, n)=0$. 
\end{rem} 
 The second kind of system that we considered is closely related to the Glassey conjecture. 
 \beeq
\label{1}
\begin{cases}
\pa^2_t u-c^2\Delta_{\gm} u=|v_t|^p \ , (t, x)\in (0, T)\times \R^n\ ,\\
\pa^2_t v-\Delta_{\gm} v=|u_t|^q\ ,\\
u(0, x)=\ep u_0(x), \ u_t(0, x)=\ep u_1(x)\\
v(0, x)=\ep v_0(x), \ v_t(0, x)=\ep v_1(x)\\
\end{cases}
\eneq
Similarly, we assume $(u_0, u_1), (v_0, v_1)\in C^\infty_c(\R^n)$ and they are nontrivial, nonnegative and supported in $B_R$. It well known that the critical exponent to  
$$\pa^{2}_t u- \Delta u=|u_t|^{p}\ , $$
is $p_G(n)=1+2/(n-1)$. It is also interesting to determine the critical curve of the corresponding system \eqref{1}. 

When $c=1$ and $\gm=\gm_0$, Deng \cite{Deng} has obtained the blow up results of system \eqref{1} under the condition
\beeq
\nonumber
\label{region1}
\begin{split}
1<pq<\infty\ ,  
\begin{cases}
1\leq p<\infty\ , n=1\ ,\\
1<p \leq 2\ , n=2\ ,\\
p=1, \ n=3\ ,
\end{cases}
\end{split}
\eneq
and 
\beeq
\nonumber
\begin{split}
1<pq\leq \frac{(n+1)p}{(n-1)p-2}\ ,
\begin{cases}
2<p\leq 3\ , n=2\ ,\\
1<p\leq 2\ , n=3\ ,\\
1<p\leq \frac{n+1}{n-1}\ , n\geq 4\ .
\end{cases}
\end{split}
\eneq
Recently, Ikeda-Sobojima-Wakasa \cite{I-S-W} obtained the upper bound estimates of lifespan on a larger range (except $n=1$) 
\beeq
\label{Ga}
\Gamma_{GG}(p, q, n):=\max\{ \frac{p+1}{pq-1}, \frac{q+1}{pq-1}\}-\frac{n-1}{2}\geq 0\ ,\  p, q>1\ ,n\geq 2\ .\eneq
For global existence, Kubo-Kubota-Sunagawa \cite{KKS} obtained the radially symmetric global solution in three spatial dimensions under the conditions 
\beeq\label{817}
\frac{q+1}{pq-1}<1\ , 1<p\leq q\ , n=3\ .
\eneq
It can be observed that the condition $\Gamma_{GG}(p, q, n)\geq 0$ is sharp, at least for $n=3$. 

\begin{figure}
\centering
\begin{tikzpicture} 

\filldraw[black!10!] (1,4)--(1,1)--(2,2)

to[out=30,in=260](1.9,2.22 )--(1.85,2.353)--(1.8, 2.5)--(1.7,2.857)--(1.6, 3.33)--(1.5,4);

\filldraw[black!30!] (1,4)--(1,1)--(1.6225,1.6225)

to[out=30,in=260](1.6,1.667)--(1.5,2)--(1.4,2.5)--(1.3,3.33)--(1.27,3.7)--(1.25,4);

\node[below] at (1.3,1.88){\tiny $c\neq 1$};

\node[below] at (1.4,3){\tiny $c= 1$};

\draw[thick, domain=1.5:3] plot (\x, {{2/((\x)-1)}});

\draw[thick, domain=1:3.5] plot (\x, {\x});

\node[below] at (3.5,3) {$p=q$};

\draw[thick, dashed, domain=1.25:2] plot (\x, {{1/((\x)-1)}});

\node[below left] at (1,1) {\tiny $(1,1)$};
\node[left] at (1,2) {$2$};
\node[below] at (2,1) {\small $2$};
\node[below] at (3,1) {\tiny $3$};
	\draw[thick,-stealth] (1,1)--(1,4) node[left]{$q$};
	\draw[thick,-stealth] (1,1)--(5,1) node[below]{$p$};
\node[below] at (3,0.5) {\tiny $n= 3$, $p\leq q$};

\end{tikzpicture}

\label{tu}
\end{figure}

However, when the wave speed $c\neq 1$, the critical curve of \eqref{1} has the potential to change. Kubo-Kubota-Sunagawa \cite{KKS} obtained the radially symmetric global solution when $\gm=\gm_0$ and 
\beeq\label{817001}
\frac{q}{pq-1}<1\ , 1<p\leq q\ , n=3\ .
\eneq
Compare with \eqref{817} (see figure \ref{tu}), we are convinced that the wave speed does effect the critical curve of \eqref{1}. For the blow up part, there is few results. Xu \cite{X} obtained the blow up results under the condition
\beeq\label{81701}
\frac{p+q+2}{pq-1}\geq 2n\ , 
\eneq
when $\gm=\gm_0$. We can see that there is a gap between \eqref{81701} and  \eqref{817001} when $n=3$. 
 In this work, we fill the gap and find that the weak solution of system \eqref{1} under the condition 
\beeq
\Ga_{GG}^{*}(p, q, n)=\max\{\frac{p}{pq-1}\ , \frac{q}{pq-1}\}-\frac{n-1}{2}> 0\ , n\geq 2\ ,
\eneq
will blow up at finite time whenever $c\neq 1$. 

\begin{thm}
\label{thm2}
Let $n\geq 2$, $0<c\in \R$ and  
\beeq
\label{2re}
\begin{cases}
\Ga_{GG}^{*}(p, q, n)>0 \ , c\neq1\ ,\\
  \Ga_{GG}(p, q, n)>0 \ , c=1\ .
\end{cases}
\eneq 
Consider the system \eqref{1} posed on asymptotically Euclidean manifolds \eqref{adl1}-\eqref{unelp}.
 Assuming that
the data are nontrivial and satisfying 
 \begin{align}
 \label{807}
 \begin{split}
 \begin{cases}
 \int_{\R^n} (u_1-c\la u_0)\phi_{\la}(x)d v_{\gm}>0\ ,\\
 \int_{\R^n} (v_1-\la v_0)\phi_{\la}(x)d v_{\gm}>0\ .
\end{cases}
\end{split}
 \end{align}
Suppose it has a weak solution $u$, $v\in C^{2}([0, T_\ep);\mathcal{D}'(\R^{n}))$ with  $|v_t|^{p}$, $|u_t|^{q}$$\in$ $ C([0, T_\ep);\mathcal{D}'(\R^{n}))$ and satisfying \eqref{fspyl}. Then 
there exists a constant $\ep_0>0$, such that for any $\ep\in (0, \ep_{0})$, the solutions will blow up at finite time. Moreover, we have the estimate for the lifespan $T_\ep$
\beeq
\begin{split}
T_{\ep}\ \les
\begin{cases} 
   \ep^{-\Ga_{GG}(q, p ,n)^{-1}} \ , c=1\ ,\\
\ep^{-\Ga_{GG}^{*}(p, q ,n)^{-1}}\ , c\neq 1\ .
\end{cases}
\end{split}
\eneq
\end{thm}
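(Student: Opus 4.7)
The plan is to follow the test-function/iteration framework of the Ikeda--Sobojima--Wakasa type, adapted to the geometric setting through the generalized eigenfunctions $\phi_\la$ from Lemma \ref{elp} and with exponential weights tuned to the two distinct wave speeds. For a small parameter $\la\in(0,\la_0)$, I would introduce
\[
F(t):=\int_{\R^n}(u_t-c\la\,u)\phi_\la\,dv_\gm,\qquad G(t):=\int_{\R^n}(v_t-\la\,v)\phi_\la\,dv_\gm.
\]
Using $\Delta_\gm\phi_\la=\la^2\phi_\la$ and the support property \eqref{fspyl} to justify integration by parts, \eqref{1} collapses to the first-order identities
\[
\bigl(e^{c\la t}F(t)\bigr)'=e^{c\la t}\!\!\int|v_t|^p\phi_\la\,dv_\gm,\qquad\bigl(e^{\la t}G(t)\bigr)'=e^{\la t}\!\!\int|u_t|^q\phi_\la\,dv_\gm.
\]
By \eqref{807} one has $F(0),G(0)>0$, so both $F,G$ remain strictly positive with explicit exponential lower bounds of size $\ep$.

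Next I would derive lower bounds for the nonlinear right-hand sides. Setting $\tilde F(t):=\int u\phi_\la\,dv_\gm$ and $\tilde G(t):=\int v\phi_\la\,dv_\gm$, a short positivity argument (exploiting the nonnegativity of the data and of the forcing) shows $\tilde F,\tilde G,\tilde F',\tilde G'\geq 0$, so Hölder's inequality on the support yields
\[
\int|v_t|^p\phi_\la\,dv_\gm\ \gtrsim\ \frac{(\tilde G'(t))^p}{\bigl(\int_{D_2(t)}\phi_\la\,dv_\gm\bigr)^{p-1}},
\]
and symmetrically for $|u_t|^q$. The asymptotic profile of $\phi_\la$ from Lemma \ref{elp} (roughly $\phi_\la(x)\sim r^{-(n-1)/2}e^{\la\int_0^r K\,d\tau}$ at infinity), together with \eqref{fspyl}, furnishes the polynomial-in-$t$ bounds $\int_{D_1(t)}\phi_\la\,dv_\gm\les(1+t)^{(n-1)/2}e^{c\la t}$ and the analogous estimate on $D_2(t)$ with $c=1$.

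Substituting back gives a coupled system of integral inequalities for $F,G,\tilde F',\tilde G'$ in which the exponentials $e^{c\la t}$ and $e^{\la t}$ on the two sides partly cancel. When $c=1$ the cancellation is exact on alternating steps of the iteration and a standard bootstrap on polynomial exponents reproduces the curve $\Gamma_{GG}(p,q,n)=0$. When $c\neq 1$ a residual factor $e^{|c-1|\la t}$ survives in the nonlinear coupling; iterating while optimizing $\la\to 0^+$ at the end (trading exponential for polynomial growth against $T_\ep$) forces the weaker curve $\Gamma_{GG}^{*}(p,q,n)=0$ and the stated lifespan bound.

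The main obstacle is controlling this mismatched exponential $e^{|c-1|\la t}$ at each step of the iteration in the $c\neq 1$ case without destroying the polynomial-in-$t$ gain; this mismatch, absent in the single-speed setting, is precisely what replaces $\Gamma_{GG}$ by $\Gamma_{GG}^{*}$. A secondary technical point is verifying that the short-range perturbation $\gm_2$ neither disturbs the leading asymptotic of $\phi_\la$ nor the propagation bound \eqref{fspyl}, which reduces to invoking Lemma \ref{elp} together with \eqref{dlfjia}.
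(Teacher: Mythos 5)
Your setup reproduces the paper's starting point in spirit (eigenfunction test functions, first-order identities, H\"older, iteration), but the specific choice of functionals and of the H\"older splitting loses exactly the mechanism that separates $\Ga_{GG}$ from $\Ga_{GG}^*$, so the argument as written cannot reach the claimed curve for $c=1$. Concretely: your $F=\int(u_t-c\la u)\phi_\la\,dv_\gm$ satisfies $(e^{c\la t}F)'=e^{c\la t}\int|v_t|^p\phi_\la\,dv_\gm$, so every integration takes the form $\int_0^t e^{c\la s}N(s)\,ds$ with an integrand carrying a genuinely growing exponential: your H\"older bound gives $N(s)\gtrsim e^{\la s}\times(\text{polynomial})$ even when $c=1$, since $\int_{D_2}\phi_\la\,dv_\gm\sim e^{\la t}(1+t)^{(n-1)/2}$ while $\tilde G'\sim e^{\la t}$. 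Such an integral is comparable to its endpoint value divided by $\la$, not to $t$ times the integrand, so each iteration step gains only a constant instead of a factor of $t$. Carried through, this produces the exponent $\frac{p}{pq-1}$ rather than $\frac{p+1}{pq-1}$, i.e.\ only the region $\Ga_{GG}^*>0$, and the theorem's stronger claim for $c=1$ (blow-up on $\Ga_{GG}>0$ with lifespan $\ep^{-\Ga_{GG}(q,p,n)^{-1}}$) is not reached. The paper avoids this by taking $F=\int(u_t+c\la u)\psi_1$, $G=\int(v_t+\la v)\psi_2$ with $\psi_1=e^{-c\la t}\phi_\la$, $\psi_2=e^{-\la t}\phi_\la$: then $F'=\int|v_t|^p\psi_1\ge 0$ directly, with no exponential conversion, and the H\"older denominator is the \emph{mixed} weight $\int\psi_1^{-p'/p}\psi_2^{p'}\,dv_\gm=e^{\la t(c-p)/(p-1)}\int\phi_\la\,dv_\gm\les e^{\la(c-1)t/(p-1)}(1+t)^{(n-1)/2}$ (Lemma \ref{le1}), in which only the speed mismatch survives; for $c=1$ the resulting ODE system is purely polynomial and every integration gains a full power of $t$.

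Second, your remedy for $c\neq1$ --- ``optimizing $\la\to0^+$ at the end'' --- is not viable and is not what is needed. The constants in Lemmas \ref{elp} and \ref{le1} degenerate as $\la\to0$ (the dispersive factor $\<\la r\>^{-(n-1)/2}$ disappears on the light cone), so the $(1+t)^{(n-1)/2}$ gain is lost. The paper instead keeps $\la$ fixed and absorbs the mismatch exponential via the substitution $H=e^{\la(c-1)t}F$ together with $\int_{t/2}^te^{\la(1-c)\tau}d\tau\sim e^{\la(1-c)t}/(\la(1-c))$: exactly one time-integration is sacrificed to the exponential, and this single loss is what replaces $\frac{p+1}{pq-1}$ by $\frac{p}{pq-1}$ and yields $\Ga_{GG}^*$. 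You would need to adopt the paper's exponentially weighted functionals (or an equivalent normalization) before the iteration can be closed with the stated exponents.
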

\begin{rem}
Based on the work of Kubo-Kubota-Sunagawa \cite{KKS}, we conjecture that the critical curve to system \eqref{2} is $\Ga_{GG}^{*}(p, q, n)=0$ whenever $c\neq 1$.
\end{rem}

The last kind of system that we studied is related to both the Strauss conjecture and the Glassey conjecture. More precisely, we consider the following system 
 \beeq
\label{3}
\begin{cases}
\pa^2_t u-c^2\Delta_{\gm} u=|v|^p \ , (t, x)\in (0, T)\times \R^n\ ,\\
\pa^2_t v-\Delta_{\gm} v=|u_t|^q\ , (t, x)\in (0, T)\times \R^n\ ,\\
u(0, x)=\ep u_0(x), \ u_t(0, x)=\ep u_1(x)\ ,\\
v(0, x)=\ep v_0(x), \ v_t(0, x)=\ep v_1(x)\ .\\
\end{cases}
\eneq

When $c=1$ and $\gm=\gm_0$, Hidano-Yokoyama \cite{HY} obtained the finite time blow up result when 
$$\frac{p+1+q^{-1}}{pq-1}-\frac{n-1}{2}>0\ , q<2n/(n-1)\ , n\geq 2\ .$$
Later, Ikeda-Sobojima-Wakasa \cite{I-S-W} obtained the finite time blow up as well as the upper bound estimates of lifespan on a larger range
\beeq
\Ga_{SG}(p,q,n):=\max\{\frac{p+1+q^{-1}}{pq-1}\ , \frac{2+p^{-1}}{pq-1}\}-\frac{n-1}{2}\geq 0\ , n\geq 2\ .
\eneq
Dai-Fang-Wang \cite{DFW} showed that there are no global weak solutions of \eqref{3} when 
\beeq
\frac{q+1}{pq-1}-\frac{n-1}{2}>0\ , \frac{(n-1)(p-1)}{2}<1\ , n\geq 2\ ,
\eneq
and the global existence in three dimension with 
$$\frac{q+1+1/(pq)}{pq-1}<1\ , 1<p<2, \ 2<q<3\ , n=3\ .$$
To the best of the author's knowledge, there are no results for multiple speeds cases. 
In this paper, we obtained the blow up results of system \eqref{3} with 
\beeq\nonumber
\begin{cases}
M(p, q, n):=\min \{\Ga_{GG}(p, q, n)\ , \Ga_{SG}(p,q,n)\}>0\ ,\  c=1\ , \\
M^{*}(p, q, n):=\min\{\Ga^{*}_{GG}(p, q, n),\Ga_{SG}(p,q,n)\}>0 \ ,\  c\neq1\ .
\end{cases}
\eneq

\begin{thm}
\label{thm3}
Let $n\geq 2$, $0<c\in \R$ and 
\beeq\label{hunhe}
\begin{cases}
M(p, q, n)>0\ , \ c=1\ , \\
M^*(p, q, n)>0\ ,\  c\neq1\ .
\end{cases}
\eneq
Consider the system \eqref{3} posed on asymptotically Euclidean manifolds \eqref{adl1}-\eqref{unelp}.
 Assuming that
the data are nontrivial satisfying 
 \begin{align}
 \label{732}
 \begin{split}
 \begin{cases}
 \int u_i dv_{\gm} >0\ , \int_{\R^n} u_0\phi_{\la}(x)d v_{\gm}>0\ , \int_{\R^n} (u_1-c\la u_0)\phi_{\la}(x)d v_{\gm}>0\ , \\
\int v_i dv_{\gm} >0\ ,\int_{\R^n} v_0\phi_{\la}(x)d v_{\gm}>0\ , \int_{\R^n} (v_1-\la v_0)\phi_{\la}(x)d v_{\gm}>0\ ,
\end{cases}
\end{split}
 \end{align}
 where $i=0, 1$. Suppose it has a weak solution $u, v\in C^{2}([0, T_\ep);\mathcal{D}'(\R^{n}))$ with  $|v|^{p}, |u_t|^{q}\in C([0, T_\ep);\mathcal{D}'(\R^{n}))$ and satisfying \eqref{fspyl}. Then 
there exists a constant $\ep_0>0$, such that for any $\ep\in (0, \ep_{0})$, the solutions will blow up at finite time. Moreover, we have the estimate for the lifespan $T_\ep$
\begin{align*}
T_{\ep}\ \les\  
\begin{cases}
\ep^{-M(q, p ,n)^{-1}} \ , M(p, q, n)>0\ ,\ c =1\ ,\\
\ep^{-M^*(q, p ,n)^{-1}} \ ,M^*(p, q, n)>0\ ,\  c \neq 1\ .
\end{cases}
\end{align*}

\end{thm}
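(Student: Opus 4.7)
The plan is to adapt the iteration framework used for Theorems \ref{thm4} and \ref{thm2} to the mixed nonlinearity case, combining two families of lower bounds to recover both the Glassey-type threshold $\Ga_{GG}$ (resp.\ $\Ga_{GG}^{*}$ when $c\neq 1$) and the Strauss-Glassey threshold $\Ga_{SG}$.

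First I would introduce two pairs of functionals. The first pair, $F(t)=\int u\,dv_\gm$ and $G(t)=\int v\,dv_\gm$, captures algebraic growth: testing \eqref{3} against $1$ and using $\int\Delta_{\gm}w\,dv_\gm=0$ for compactly supported $w$ (justified by \eqref{fspyl}) yields $F''(t)=\int|v|^p\,dv_\gm$ and $G''(t)=\int|u_t|^q\,dv_\gm$, together with $F(0),F'(0),G(0),G'(0)\gtrsim\ep$ from the positivity in \eqref{732}. Combined with H\"older's inequality inside the two light cones $D_1,D_2$ of \eqref{fspyl}, this gives
\begin{align*}
F''(t)\gtrsim (R_1+t)^{-n(p-1)}G(t)^{p},\qquad G''(t)\gtrsim(R_1+ct)^{-n(q-1)}F'(t)^{q}.
\end{align*}
The second pair uses the generalized eigenfunction $\phi_\la$ from Lemma \ref{elp}: set $F_1(t)=\int u\phi_\la e^{-c\la t}\,dv_\gm$ and $G_1(t)=\int v\phi_\la e^{-\la t}\,dv_\gm$. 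Since $\Delta_\gm\phi_\la=\la^{2}\phi_\la$, the identity $A''(t)-c^{2}\la^{2}A(t)=\int|v|^{p}\phi_\la\,dv_\gm\geq 0$ (where $A(t)=\int u\phi_\la\,dv_\gm=e^{c\la t}F_1(t)$), together with the positivity of $(u_1-c\la u_0)$ and $(v_1-\la v_0)$ integrated against $\phi_\la$ in \eqref{732}, propagates to give $F_1(t),G_1(t)\gtrsim\ep$ uniformly in $t$.

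The iteration is then run in two parallel tracks. Feeding lower bounds of the form $G(t)\gtrsim\ep^{a_k}(1+t)^{b_k}$ and $F'(t)\gtrsim\ep^{c_k}(1+t)^{d_k}$ back into the differential inequalities and integrating twice improves the exponents at each step; this polynomial scheme produces the $\Ga_{SG}$ obstruction. A parallel scheme using $(F_1,G_1)$ together with the pointwise asymptotics of $\phi_\la$ supplied by Lemma \ref{elp} produces the $\Ga_{GG}$ (resp.\ $\Ga_{GG}^{*}$) obstruction. Since both iterations must drive the solution to infinity simultaneously, the binding exponent is the minimum $M$ (resp.\ $M^{*}$), and tracking the $\ep$-dependence through the slower of the two schemes yields the stated lifespan bound $T_\ep\les\ep^{-M(q,p,n)^{-1}}$ (resp.\ $\ep^{-M^{*}(q,p,n)^{-1}}$).

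The main obstacle is the asymmetry between the two nonlinearities combined with the multiple speeds. The Strauss-type term $|v|^{p}$ is naturally controlled by the \emph{undifferentiated} quantities $G,G_1$, whereas the Glassey-type term $|u_t|^{q}$ requires the \emph{differentiated} quantities $F'$ and $F_1'+c\la F_1$; when $c\neq 1$ the exponential weights $e^{\pm c\la t}$ and $e^{\pm\la t}$ no longer coincide, and precisely this mismatch is what replaces $\Ga_{GG}$ by the strictly stronger $\Ga_{GG}^{*}$. Carefully controlling these weights inside the H\"older estimates, verifying that the boundary terms from integration by parts vanish thanks to the exponential decay of $\phi_\la e^{-c\la t}$ outside the relevant light cone together with the compact support \eqref{fspyl}, and book-keeping the $\ep$-dependence so as to recover the exact exponent $M(q,p,n)^{-1}$ (resp.\ $M^{*}(q,p,n)^{-1}$), constitute the principal technical work; everything else parallels the proofs of Theorems \ref{thm4} and \ref{thm2}.
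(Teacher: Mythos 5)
Your overall architecture coincides with the paper's: Section \ref{sec5} runs exactly the two tracks you describe, an unweighted (polynomial) iteration for the $\Ga_{SG}$ branch and a $\psi_1,\psi_2$-weighted iteration (reduced to the machinery of Theorem \ref{thm2}) for the $\Ga_{GG}$/$\Ga^{*}_{GG}$ branch, with functionals differing only trivially from yours (the paper takes $F=\int_{D_2}u_t\,dv_\gm$ so that $F'=\int_{D_2}|v|^p\,dv_\gm$, rather than your $F=\int u\,dv_\gm$ with $F''$). Your treatment of the eigenfunction pair, including the identification of $F_1'+c\la F_1=\int u_t\psi_1\,dv_\gm$ as the quantity that must be bounded below for the Glassey-type nonlinearity, and of the $e^{\la(c-1)t}$ mismatch as the source of $\Ga^{*}_{GG}$ when $c\neq 1$, matches the paper.

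The one genuine gap is in how you seed the polynomial track. As written, that track starts only from $F(0),F'(0),G(0),G'(0)\gtrsim\ep$, i.e.\ from $G\gtrsim\ep(1+t)$ and $F'\gtrsim\ep$. Running the iteration from this seed does \emph{not} produce the region $\Ga_{SG}(p,q,n)>0$, nor the exponent $\bigl(\frac{p+1+q^{-1}}{pq-1}-\frac{n-1}{2}\bigr)^{-1}$: both the improvability condition and the final lifespan depend on the first-step pair $(B_1,b_1)$, and with $b_1=1$ one lands on a strictly different curve. The paper's essential intermediate step is to use the \emph{same} eigenfunction functionals that you reserve exclusively for the other track, namely $H_1=\int_{D_1}u_t\psi_1\,dv_\gm\gtrsim\ep$ (obtained from the ODE satisfied by $2H_1-L$ with $L=\int_{D_1}(u_t+\la u)\psi_1\,dv_\gm$) and $H_2=\int_{D_2}v\psi_2\,dv_\gm\gtrsim\ep$, combined via H\"older with the decay $\int_{D_i}\psi_i^{r}\,dv_\gm\les(1+t)^{n-1-\frac{n-1}{2}r}$ from Lemma \ref{le1}, to upgrade the seeds to $F'\gtrsim\ep^{p}(1+t)^{n-1-\frac{n-1}{2}p}$ and $G''\gtrsim\ep^{q}(1+t)^{n-1-\frac{n-1}{2}q}$ before the iteration begins; this is the analogue of \eqref{die7} in the proof of Theorem \ref{thm4} and is precisely what the hypotheses $\int u_0\phi_\la\,dv_\gm>0$, $\int(u_1-c\la u_0)\phi_\la\,dv_\gm>0$ are for. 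Once you insert this upgrade (yielding \eqref{die70} and the starting values \eqref{k4}, \eqref{k3}), the rest of your plan goes through as in the paper.
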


 \subsubsection*{Outline} 
 
Our paper is organized as follows. In Section \ref{sec:1stTest},
we list the existence results of special solutions for the elliptic ``eigenvalue" problems \eqref{1.1}, with certain asymptotic behavior.
These solutions play important role in constructing the test functions and the proof of blow up results. Moreover, we list a Lemma in \cite{SGM} for ODE system which can be viewed as a vector version of Kato type Lemma.  In Sections \ref{proof}-\ref{sec5}, we give the proof of Theorems \ref{thm4}-\ref{thm3} by applying iteration method (see, e.g., \cite{LT}).

  \section{Preliminary}\label{sec:1stTest} 
In this section, we list some Lemmas we shall use later.  We first construct
special positive standing wave solutions, of the form $$\psi_1(t, x)=e^{-c\la t}\phi_{\la}(x), \ \psi_2(t, x)=e^{-\la t}\phi_{\la}(x)\ ,$$
 to the linear problem,
$$\pa^{2}_t u-c^2\Delta_{\gm} u=0\ , \pa^{2}_t v-\Delta_{\gm} v=0\ ,$$
with the desired asymptotic behavior. In turn, it is reduced to constructing solutions to certain elliptic ``eigenvalue" problems:
 \beeq
\label{1.1}
\Delta_{\gm}\phi_\la=\lambda^{2}\phi_\la\ .
\eneq
These solutions  will play a key role in the construction of the test functions. 
  \begin{lem}[Lemma 3.1 in \cite{LW2019} ]\label{elp}
Let $n\geq 2$ and 
$(\R^n, \gm)$ be asymptotically Euclidean manifold with \eqref{adl1}-\eqref{unelp}.
Then there exist $\la_0, c_0>0$ such that for any
$0<\la\le \la_0$, there is a solution of \eqref{1.1} satisfying
\beeq
\label{1.50}
c_{0} 
<\phi_\la(x) < c_0^{-1}\< \la r\>^{-\frac{n-1}{2}}e^{\la \int^{r}_{0}K(\tau)d\tau} \ .
\eneq
\end{lem}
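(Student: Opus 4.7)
The plan is to construct $\phi_\la$ in two stages: first build a radial solution for the spherically symmetric piece $\gm_1$ via ODE analysis, then treat the short-range piece $\gm_2$ as a perturbation.

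For $\gm=\gm_1$, radial symmetry reduces \eqref{1.1} to the ordinary differential equation
$$\phi'' + \left(\frac{n-1}{r} - \frac{K'}{K}\right)\phi' = \la^2 K^2 \phi.$$
I would introduce the Liouville-type change of variable $s=\int_0^r K(\tau)\,d\tau$ together with the substitution $\phi=r^{-(n-1)/2}\,\Psi$, which flattens the radial arclength and eliminates the first-order term in the $s$-variable, producing a Schr\"odinger-type equation
$$\Psi_{ss} - \la^2 \Psi = V(r(s))\,\Psi,$$
with a potential $V$ that is $O(r^{-2})$ globally and $O(r^{-2-\rho})$ at infinity by \eqref{adl2}. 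Defining $\phi_\la$ through the regular initial data $\phi_\la(0)=1$, $\phi_\la'(0)=0$, I would establish the lower bound in the near regime $\la r\les 1$ by comparison with the $\la=0$ equation (whose solution with these data is asymptotically constant) plus a Gr\"onwall argument, and the matching upper bound in the far regime $\la r\gt 1$ by a Liouville--Green (WKB) expansion that identifies $\phi_\la$, up to a bounded factor, with the growing fundamental solution $\sim r^{-(n-1)/2}\la^{-(n-1)/2}e^{\la s}$. Patching the two regimes yields \eqref{1.50} for $\gm_1$.

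For the full metric $\gm=\gm_1+\gm_2$, decompose $\Delta_\gm = \Delta_{\gm_1}+E$, where $E$ is a second-order operator whose coefficients decay like $e^{-\be s}$ by \eqref{dlfjia}. Writing $\phi_\la = \phi_\la^{(1)} + u$ with $\phi_\la^{(1)}$ the radial solution from the previous step, the correction must satisfy
$$(\Delta_\gm - \la^2)\,u = -E\,\phi_\la^{(1)}.$$
The forcing is exponentially small in $s$ as soon as one takes $\la_0 < \be$, since the decay of the coefficients of $E$ then beats the $e^{\la s}$ growth of $\phi_\la^{(1)}$. I would solve for $u$ by a contraction argument in a weighted $L^{\infty}$ space adapted to the envelope $\<\la r\>^{-(n-1)/2}e^{\la s}$, using the resolvent bounds extracted from the $\gm_1$ construction, and smallness of $u$ would then let $\phi_\la$ inherit \eqref{1.50} with adjusted absolute constants.

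The principal obstacle I anticipate is establishing the Liouville--Green asymptotics uniformly in $\la\in(0,\la_0]$: the transition region sits at $\la r\sim 1$, which drifts to infinity as $\la\to 0$, so the WKB error must be controlled in integral rather than pointwise form along a stretched region, and one must also verify that $\phi_\la^{(1)}$ stays strictly positive throughout (not just at infinity) so that the lower bound $\phi_\la>c_0$ survives the perturbation. A secondary technical point is fixing $\la_0$ small enough that the exponential decay of $\gm_2$ outweighs the growth of $\phi_\la^{(1)}$ \emph{together with} its first two derivatives; the spherical symmetry of $\gm_1$ and the explicit WKB form keep these estimates tractable, which is why the whole construction can be packaged in a single lemma, as done in Lemma 3.1 of \cite{LW2019}.
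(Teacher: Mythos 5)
First, a point of order: this paper does not prove Lemma \ref{elp} at all --- it is imported verbatim as Lemma 3.1 of \cite{LW2019}, so there is no in-paper argument to compare against. Your outline is nevertheless the natural one and coincides in structure with the construction in the cited source: reduce \eqref{1.1} for the spherically symmetric long-range part $\gm_1$ to the radial ODE in the arclength variable $\tilde r=\int_0^r K$, where the regular solution behaves like a modified Bessel function and gives the envelope $\<\la \tilde r\>^{-(n-1)/2}e^{\la \tilde r}$, and then absorb the exponentially decaying part $\gm_2$ perturbatively, using $\la_0<\be$ so that the decay of the coefficients in \eqref{dlfjia} beats the $e^{\la\tilde r}$ growth. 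Your Liouville transformation computation is correct, and you correctly identify the two delicate points (uniformity in $\la$ across the transition region $\la r\sim 1$, and survival of the strict lower bound).

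The step that is genuinely underspecified is the perturbation argument. The radial ODE analysis only produces the two radial fundamental solutions of $\Delta_{\gm_1}-\la^2$, but $\gm_2$ is \emph{not} assumed spherically symmetric, so the forcing $E\phi_\la^{(1)}$ and the correction $u$ are not radial; ``resolvent bounds extracted from the $\gm_1$ construction'' do not by themselves give an inverse for the full elliptic operator on non-radial data with the weighted $L^\infty$ bounds you need. One must either decompose in spherical harmonics or, more efficiently, exploit that $\Delta_\gm-\la^2$ has nonpositive zeroth-order coefficient and run a comparison/barrier argument with super- and subsolutions built from the envelope in \eqref{1.50}. Relatedly, your contraction constant is governed by the (not assumed small) implicit constant in \eqref{dlfjia}, so the fixed-point map does not obviously contract without an extra device (splitting off a large compact region, or a monotone iteration); and for the lower bound $\phi_\la>c_0$ it is not enough that $u$ be small relative to the \emph{growing} envelope --- you need $|u|$ small in absolute terms on the bounded region where $\phi_\la^{(1)}=O(1)$, which is exactly what the comparison argument supplies. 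With those two repairs the sketch matches the intended proof; as written, the perturbation stage has a real gap.
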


With the asymptotic behavior of $\phi_\la$ in hand, we could have the following estimates for $\psi_1, \psi_2$.


\begin{lem}
\label{le1}
Let $n\geq 2$, $p, q>1$. Then we have
\begin{align*}
&\int_{D_i}\psi_{i}^{p}d v_{\gm} \ \les\  (t+1)^{n-1-\frac{n-1}{2}p}\ , i=1, 2\ ,\\
&\int_{D_2}\psi_{1}^{-\frac{p'}{p}}\psi_{2}^{p'}d v_{\gm} \ \les 
e^{\frac{\la(c-1)t}{p-1}}(t+1)^{\frac{n-1}{2}}\ ,\\
&\int_{D_1}\psi_{2}^{-\frac{q'}{q}}\psi_{1}^{q'}d v_{\gm} \ \les\  
e^{\frac{\la (1-c)t}{q-1}}(t+1)^{\frac{n-1}{2}}\ .
\end{align*}

\end{lem}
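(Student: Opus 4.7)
My plan is to reduce all three bounds to one master estimate for $\int_{D_i}\phi_\la^\al\,dv_\gm$, and then extract the explicit time factors $e^{-c_i\la t}$ in $\psi_1,\psi_2$ by algebra.

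The master claim I would establish is: for any $\al>0$ and $i\in\{1,2\}$ (with $c_1=c$, $c_2=1$),
$$\int_{D_i}\phi_\la^\al\, dv_\gm \les (t+1)^{n-1-\frac{n-1}{2}\al}\, e^{\al\la c_i t}.$$
I would substitute the pointwise bound from Lemma \ref{elp}, $\phi_\la(x)\le c_0^{-1}\<\la r\>^{-(n-1)/2}e^{\la\int_0^r K(\tau)d\tau}$, and use uniform ellipticity \eqref{unelp} together with \eqref{dlfjia} to bound $dv_\gm\les K(r)r^{n-1}\,dr\,d\omega$. After the change of variable $s=\int_0^r K(\tau)d\tau$, under which $D_i=\{s\le c_i t+R_1\}$ and $r(s)\sim s$ as $s\to\infty$, the radial integral becomes
$$\int_0^{c_i t+R_1}\<\la r(s)\>^{-\frac{n-1}{2}\al}\, r(s)^{n-1}\, e^{\al\la s}\, ds.$$
This is a Laplace-type integral; integration by parts (equivalently, the rescaling $u=\al\la s$ together with the elementary asymptotics $\int_0^U u^\beta e^u\,du\sim U^\beta e^U$) shows it is controlled by its value near $s=c_i t$ times the effective width $O((\al\la)^{-1})$, which yields the master bound for fixed $\la>0$.

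The three lemma inequalities then follow by expanding $\psi_1^a\psi_2^b=e^{-(ac+b)\la t}\phi_\la^{a+b}$. For the first one, $a+b=p$ and $(ac_i+b)=c_i p$, so the prefactor $e^{-c_i\la p t}$ exactly cancels the $e^{p\la c_i t}$ from the master bound, leaving $(t+1)^{n-1-(n-1)p/2}$. For the second, taking $a=-p'/p$, $b=p'$, one computes $a+b=1$ and $-(ac+b)=(c-p)/(p-1)=(c-1)/(p-1)-1$; multiplying by the master bound over $D_2$, which carries the factor $e^{\la t}$, the $-1$ is absorbed and one gets $e^{\la(c-1)t/(p-1)}(t+1)^{(n-1)/2}$. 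The third inequality is entirely symmetric with the roles of $p,q$ and $c,1$ swapped.

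The main technical obstacle is the Laplace-type evaluation: it is essential to retain the factor $e^{\al\la(s-c_i t)}$ rather than bound it by a constant, since crudely replacing it by $1$ yields $t^{n-(n-1)\al/2}$, one power of $t$ too large. The small-$r$ region where $\<\la r\>\simeq 1$ contributes only an $O(1)$ term that is absorbed into the constant, and the comparison of $dv_\gm$ with $K(r)r^{n-1}\,dr\,d\omega$ is a direct consequence of uniform ellipticity together with the short-range bound \eqref{dlfjia}.
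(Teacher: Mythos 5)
Your proof is correct and follows essentially the same route as the paper: the same algebraic extraction of the factors $e^{-c_i\la t}$, the pointwise bound of Lemma \ref{elp}, the change of variables $s=\int_0^r K(\tau)\,d\tau$, and an endpoint-dominated evaluation of the resulting exponential integral (your Laplace-type argument is exactly the paper's splitting of $D_2$ into the regions $s\le (t+R_1)/2$ and $s\ge (t+R_1)/2$). Packaging everything into a single master estimate for $\int_{D_i}\phi_\la^\al\,dv_\gm$ is a tidy unification, but not a different method.
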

\begin{proof}
We only give a short proof of the second inequality since others follow the same way. 
$$\int_{D_{2}}\psi_{1}^{-\frac{p'}{p}}\psi_{2}^{p'} d v_{\gm}=e^{\frac{\la t(c-p)}{p-1}}\int_{D_{2}} \phi_{\la}(x)d v_{\gm}=e^{\frac{\la (c-1)t}{p-1}}\int_{D_{2}}e^{-\la t}\phi_{\la}d v_{\gm}\ .$$
Then by Lemma 3.2 in \cite{LWJDE} (with $q=1$), we have that 
$$\int_{D_{2}}e^{-\la t}\phi_{\la}d v_{\gm}\  \les \ (t+1)^{\frac{n-1}{2}}\ ,$$
which complete the proof. For the reader's convenience, we present a proof here.
We divide the region $D_2$ into two disjoint parts: $D_2=N_1\cup N_{2}$ where
$$N_{1}=\{(t, x); \int^{|x|}_{0}K(\tau)d\tau\leq \frac{t+R_{1}}{2}\}\ .$$
For the region $N_{1}$, we have 
$$\int_{N_{1}}\phi_\la d v_{\gm}\les e^{-\la t}\int_{N_{1}}(1+\la|x|)^{-\frac{n-1}{2}}e^{\la\int^{|x|}_{0}K(\tau)d\tau}d v_{\gm}\ .$$
Let $\tilde{r}=\int^{|x|}_{0}K(\tau)d\tau$, then $d\tilde{r}=K(r)dr$ and $\delta_{0}r\leq \tilde{r}\leq r/ \delta_{0}$ since $K\in[\delta_{0}, 1/\delta_{0}]$. Then we get 
\begin{align*}
\int_{N_{1}} \phi_\la d v_{\gm}&\les e^{-\la t}\int^{\frac{t+R_{1}}{2}}_{0}(1+\tilde{r})^{n-1-\frac{n-1}{2}}e^{\la\tilde{r}}d\tilde{r}\\
&\les e^{-\la t}\int^{\frac{t+R_{1}}{2}}_{0} e^{\frac{3}{2}\la\tilde{r}}d\tilde{r}\\
&\les e^{-\frac{\la}{4} t} \les (1+t)^{\frac{n-1}{2}},
\end{align*}
where we have used the fact that $e^{-t}$ decays faster than any polynomial. 
For the region $N_{2}$, it is easy to see
\begin{align*}
\int_{N_{2}}\phi_\la d v_{\gm}&\les e^{-\la t}\int^{t+R_{1}}_{\frac{t+R_{1}}{2}}e^{\la\tilde{r}}(1+\tilde{r})^{n-1-\frac{n-1}{2}}d\tilde{r}\\
&\les (1+t)^{\frac{n-1}{2}}\int^{t+R_{1}}_{\frac{t+R_{1}}{2}}e^{\la\tilde{r}-\la t}d\tilde{r}\\
&\les (1+t)^{\frac{n-1}{2}}\ .
\end{align*}
\end{proof}

At the end of this section, we list a Lemma for ODE systems to blow up in finite time. This Lemma can be viewed as a vector version of the Kato type Lemma. 
\begin{lem}[Lemma 2.1 in \cite{SGM}]
\label{le2}
Let $a, b\in [0, \infty]$ with $a<b$ and $M(t), N(t)\in C([a, b]; \R)$. Suppose that for all $t\in [a, b]$, we have 
\beeq
\begin{split}
\begin{cases}
M(t)\geq C(t+R)\ ,\\
N(t)\geq C(t+R)^{s}\ , \\
M''(t)\geq C(t+R)^{-\al}(N(t))^e\ ,\\
N''(t)\geq C(t+R)^{-\be}(M(t))^l\ ,
\end{cases}
\end{split}
\eneq
where $C, R>0$, $s\geq 1$, $\al, \be>0$, $e, l>0$. Moreover, if $el>1$ and 
$$l(\al-2)+\be-2< s(el-1)\ ,$$
then $b<\infty$. 

\end{lem}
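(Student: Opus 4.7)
The plan is to run a Kato-type iteration that repeatedly feeds the pair of inequalities back into themselves to produce a strictly growing sequence of polynomial lower bounds on $M(t)$ and $N(t)$, and then to exploit the condition $el > 1$ to blow the bound up to something super-polynomial, forcing a contradiction at a finite time.

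First I would establish, by induction on $k$, two families of inequalities of the form
\begin{equation*}
M(t) \ \geq \ A_k (t+R)^{a_k}, \qquad N(t) \ \geq \ B_k (t+R)^{b_k}, \qquad t \in [a,b],
\end{equation*}
starting from $a_0 = 1$, $b_0 = s$, $A_0, B_0 \geq C$. The induction step is the routine one: plug the bound on $N$ into the hypothesis $M''(t) \geq C(t+R)^{-\alpha} N(t)^e$, getting
\begin{equation*}
M''(t) \ \geq \ C B_k^e (t+R)^{eb_k - \alpha},
\end{equation*}
and integrate twice from $a$ to $t$; using $M(a), M'(a) \geq 0$ (which follows by absorbing the lower-order terms into $C$ since $M(t) \geq C(t+R)$) one arrives at
\begin{equation*}
a_{k+1} \ = \ eb_k - \alpha + 2, \qquad A_{k+1} \ \geq \ \frac{C B_k^e}{(a_{k+1})^2},
\end{equation*}
and symmetrically $b_{k+1} = la_k - \beta + 2$, $B_{k+1} \geq CA_k^l / (b_{k+1})^2$.

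Next I would analyze the two coupled linear recurrences. Iterating twice decouples them into a recurrence with multiplier $el > 1$, so $a_k, b_k \sim (el)^{k/2}$ up to the fixed points $a^\ast, b^\ast$ of the affine parts. Tracking the constants $A_k, B_k$ on a logarithmic scale, one finds that $\log A_k / a_k$ satisfies a first-order recurrence whose solution converges to a finite limit precisely when the contributions of the logarithmic terms $-2 \log a_k$ and the power factor $\log(t+R)$ are dominated by the growth of $a_k$. The key point, and where the hypothesis enters, is that after substituting the fixed points $a^\ast = (2 - \alpha + e(2-\beta))/(1-el)$, $b^\ast = (2-\beta + l(2-\alpha))/(1-el)$, the exponent of $(t+R)$ in the eventual lower bound
\begin{equation*}
M(t) \ \geq \ C(t+R)^{a^\ast}\bigl(J(t)\bigr)^{(el)^k}
\end{equation*}
contains a factor $J(t) = c (t+R)^{\gamma}$ with $\gamma > 0$ iff the initial seed exponent $s$ satisfies $s + a^\ast > 0$, and a short algebraic manipulation shows this is equivalent to $l(\alpha-2) + \beta - 2 < s(el-1)$.

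Once $J(t_0) > 1$ for some $t_0 \in [a,b)$, the bound $M(t_0) \geq C(t_0+R)^{a^\ast} J(t_0)^{(el)^k}$ tends to $+\infty$ as $k \to \infty$, contradicting $M \in C([a,b])$, so $b$ cannot be $+\infty$ — in fact $b$ is bounded by the smallest such $t_0$. The main obstacle I expect is the bookkeeping: keeping careful track of the constants $A_k, B_k$ (both from the double integration and from the polynomial denominators $(a_{k+1})^2$, $(b_{k+1})^2$) to show that they do not decay so fast as to swamp the super-exponential growth of $(el)^k$, and extracting the critical threshold $l(\alpha-2) + \beta - 2 < s(el-1)$ exactly. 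The rest is a standard double-integration and linear-recurrence analysis.
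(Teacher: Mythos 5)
The paper does not prove this lemma; it imports it verbatim as Lemma 2.1 of \cite{SGM}, and the iteration scheme you describe is exactly the standard Kato-type argument behind it (and the same machinery the paper itself runs on its functionals $F,G$ in Sections 3--5, with the sequences $A_j,a_j,B_j,b_j$, the logarithmic recursion, and the convergent series $S(\infty)$). So your route is the right one and matches the intended proof in structure. Two points need tightening before it is airtight. First, $M'(a)\geq 0$ does \emph{not} ``follow by absorbing lower-order terms'': the correct fix is to note that $M''\geq 0$ makes $M'$ nondecreasing, and since $M(t)\geq C(t+R)\to\infty$ under the assumption $b=\infty$, there is a $t_1$ past which $M'\geq 0$; one then integrates twice from $t_1$ (or over $[t/2,t]$), which also handles the early steps where $eb_k-\alpha+2$ may be nonpositive by falling back on the hypotheses $M\geq C(t+R)$, $N\geq C(t+R)^s$. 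Second, the threshold is not ``$s+a^{\ast}>0$'': with your decoupled recurrence $b_{k+2}=el\,b_k+l(2-\alpha)+2-\beta$ the relevant fixed point is $b^{\ast}=\bigl(l(\alpha-2)+\beta-2\bigr)/(el-1)$, and the growth condition is $s>b^{\ast}$, i.e. $b_0-b^{\ast}>0$ so that $b_{2k}-b^{\ast}=(el)^k(s-b^{\ast})\to\infty$; this is precisely $l(\alpha-2)+\beta-2<s(el-1)$. You do state the correct final inequality, so this is an algebra slip rather than a conceptual error, but as written the intermediate claim involves the wrong fixed point (it mixes up the roles of $\alpha,\beta$ and of $e,l$). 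With those two repairs the argument closes as you say: $\ln B_{2k}\geq (el)^k(\ln B_0-S(\infty))$ plus the positive multiple of $\ln(t+R)$ forces $N(t)=\infty$ for $t$ large, contradicting continuity on $[a,b]$ if $b=\infty$.
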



 \section{Proof of Theorems \ref{thm4}}\label{proof}
 In this section, we give the proof the Theorem \ref{thm4}. Without loss of generality, we assume $0<c\leq1$. In this case, we see that $D_1\subset D_2$. We first define functionals
  $$F(t)=\int_{D_2} u dv_{\gm}\ , \ G(t)=\int_{D_2}v dv_{\gm}\ .$$
Then by the system \eqref{2}, we know that
$$F''(t)\footnote{Note that $u|_{\pa D_1}=0$, hence $u|_{\pa D_2}=0$.}=\int_{D_2} |v|^{p}dv_{\gm}\ , G''(t)=\int_{D_2} |u|^{q}dv_{\gm}\ .$$
To connect $F''$, $G''$ with $F$, $G$, we apply H\"older's inequality to get 
$$F''=\int_{D_2} |v|^{p}dv_{\gm} \gtrsim \frac{\big(\int_{D_2}v dv_{\gm}\big)^p}{(t+1)^{n(p-1)}}\ , \ G''=\int_{D_2} |u|^{q}dv_{\gm} \gtrsim \frac{\big(\int_{D_2}u dv_{\gm}\big)^q}{(t+1)^{n(q-1)}}$$ 
Hence we can get the following ODE system 
\beeq
\label{die8}
F''\geq c_0\frac{G^p}{(t+1)^{n(p-1)}}\ , 
G''\geq c_1 \frac{F^q}{(t+1)^{n(q-1)}}\ ,
\eneq
for some constants $c_0, c_1>0$. By the assumption \eqref{73} on the initial data, we have the lower bound of $F$, $G$
\beeq
F(t)\gt \ep(t+1)\ ,  \ G\gt \ep(t+1), \ t>0\ .
\eneq
Define auxiliary functions
$$H_1(t)=\int_{D_1} u\psi_1 dv_{\gm}\ , \ H_2(t)=\int_{D_2} v\psi_2 dv_{\gm}\ .$$We will see that 
the lower bound of $F$, $G$ could be improved to
\beeq
\label{die7}
F(t) \gtrsim   \ep^p (t+1)^{n+1-\frac{n-1}{2}p}\ , G(t)\gtrsim  \ep^q (t+1)^{n+1-\frac{n-1}{2}q}\ , \ t>0\ ,
\eneq
under the condition $$n-\frac{n-1}{2}p>0\ , n-\frac{n-1}{2}q>0\ .$$
 In fact, by \eqref{2}, we have 
$$H''_2+2\la H'_{2}=\int_{D_2} |u|^{q}\psi_2 dv_{\gm}\geq 0\ ,$$
thus by the assumption \eqref{73} on the initial data , 
$$H_2'(t)\geq e^{-2\la t}H_2'(0)\geq 0\ , H_2(t)\geq H_2(0) \gtrsim \ep\ .$$
By applying H\"older's inequality and Lemma \ref{le1}, we get
\begin{align*}
\ep\les H_2\les &\Big(\int_{D_2} |v|^{p}dv_{\gm}\Big)^{1/p}\Big(\int_{D_2} |\psi_2|^{p'}dv_{\gm}\Big)^{1/p'}\\
\les &\Big(\int_{D_2} |v|^{p}dv_{\gm}\Big)^{1/p}\Big((t+1)^{n-1-\frac{n-1}{2}p'}\Big)^{1/p'}\ .
\end{align*}
Then
$$F''=\int_{D_2} |v|^{p} dv_{\gm}\gtrsim \ep^p (t+1)^{n-1-\frac{n-1}{2}p}\ .$$
Similarly, we see that 
$$H''_1+2\la cH'_{1}=\int_{D_1} |v|^{q}\psi_1 dv_{\gm}\geq 0\ ,$$
thus 
$$H_1'(t)\geq e^{-2\la t}H_1'(0)\geq 0\ , H_1(t)\geq H_1(0) \gtrsim \ep\ .$$
By applying H\"older's inequality and Lemma \ref{le1}, we get
\begin{align*}
\ep\les H_1\les &\Big(\int_{D_1} |u|^{q}dv_{\gm}\Big)^{1/q}\Big(\int_{D_1} |\psi_1|^{q'}dv_{\gm}\Big)^{1/p'}\\
\les &\Big(\int_{D_1} |u|^{q}dv_{\gm}\Big)^{1/q}\Big((t+1)^{n-1-\frac{n-1}{2}q'}\Big)^{1/q'}\ ,
\end{align*}
which yields
$$ G''=\int_{D_2}|u|^{q} dv_{\gm}\geq \int_{D_1}|u|^{q} dv_{\gm}\gtrsim \ep^q (t+1)^{n-1-\frac{n-1}{2}q}\ .$$
Hence, we could integrate the 
above inequalities twice to get \eqref{die7}.

Applying Lemma \ref{le2} to \eqref{die8} and \eqref{die7}, we know that the solutions will blow up if $\Ga_{SS}(p, q, n)>0$. In fact, by the second inequality of \eqref{die8}, we have 
$$G(t) \gtrsim (t+1)^{\big(n+1-\frac{n-1}{2}p\big)q-n(q-1)+2}\ .$$
Heuristically, 
the lower bound of $F$ could be improved by the first inequality of \eqref{die8} if 
$$\Big(\big(n+1-\frac{n-1}{2}p\big)q-n(q-1)+2\Big)p-n(p-1)+2> n+1-\frac{n-1}{2}p\ ,$$
similarly, the lower bound of $G$ could be improved if $$\Big(\big(n+1-\frac{n-1}{2}q\big)p-n(p-1)+2\Big)q-n(q-1)+2> n+1-\frac{n-1}{2}q\ ,$$
that is $\Ga_{SS}(p, q, n)>0$. Without loss of generality, we assume $p\leq q$ and in this case 
\beeq
\Ga_{SS}(p, q, n)=\frac{p^{-1}+q+2}{pq-1}-\frac{n-1}{2}>0\ .
\eneq
To get the upper bound lifespan estimates, we apply iteration method and we will see that there exists $\ep_0>0$ such that for any $\ep\in (0, \ep_0)$, we have the lower bound of $F$, $G$ by some sequences
$$F\geq A_j(t+1)^{a_j}\ , G\geq B_j(t+1)^{b_j}\ , j\in\N\ ,$$
with initial lower bound
 $$A_1=\ep^p\ , a_1=n+1-\frac{n-1}{2}p\ .$$
In fact, by putting the lower bound of $F$ to the second inequality of \eqref{die8}, we have 
$$G''\geq c_1A^{q}_1(t+1)^{qa_1-n(q-1)}=c_1\ep^{pq}(t+1)^{qa_1-n(q-1)}\ .$$ 
Note that $G(0), G'(0)\sim \ep$, then we have
\begin{align*}
G'\geq &\frac{c_1\ep^{pq}}{qa_1-n(q-1)+1}(t+1)^{qa_1-n(q-1)+1}-\frac{c_1\ep^{pq}}{qa_1-n(q-1)+1}+G'(0)\\
\geq &\frac{c_1\ep^{pq}}{qa_1-n(q-1)+1}(t+1)^{qa_1-n(q-1)+1}\ ,
\end{align*}
and 
\begin{align*}
G\geq \frac{c_1\ep^{pq}(t+1)^{qa_1-n(q-1)+2}}{\big(qa_1-n(q-1)+1\big)\big(qa_1-n(q-1)+2\big)}
\end{align*}
if we take $\ep$ small enough. Hence we obtain that
$$B_1=\frac{c_1\ep^{pq}}{b_1\big(b_1-1\big)}\ , b_1=qa_1-n(q-1)+2\ ,$$
and then we use above to get $A_2$, $a_2$ by the first inequality of \eqref{die8}. Following this iteration way, we can obtain the following iteration equation
\begin{align}
\label{byzd}
\begin{cases}
A_j =\frac{c_0B^{p}_{j-1}}{(a_j-1)a_j}\ , a_j =pb_{j-1}-n(p-1)+2\ , j\geq 2\\
\\
 B_{j}=\frac{c_1A^{q}_j}{(b_j-1)b_j}\ , b_{j}=q a_j-n(q-1)+2\ , j\geq 1
\end{cases}
\end{align}
if we have 
\beeq
\label{8052}
(a_j+1)A_j \leq \ep\ ,\ (b_j +1)B_j\leq \ep\ , \forall j\in \N\ .
\eneq
In fact, by \eqref{byzd} we get that
$$
b_j=\Big(\frac{p^{-1}+q+2}{pq-1}-\frac{n-1}{2}\Big)(pq)^{j}+n-\frac{2q+2}{pq-1}\ .$$
Since $b_j$ and $a_j$ are increasing to $\infty$ as $j\to\infty$, there exists $N_0>0$ such that $a_j, b_j\geq 2$ for any $j\geq N_0$. Thus we have 
\begin{align}
\label{8051}
\begin{cases}
(b_j +1)B_j=\frac{c_1A^{q}_j}{(b_j-1)b_j}+\frac{c_1A^{q}_j}{(b_j-1)}\leq 2c_1A^{q}_j\leq 2c_1c^{q}_0B^{pq}_{j-1}\ , j\geq N_0+1\ , \\
(a_j+1)A_j=\frac{c_0B^{p}_{j-1}}{(a_j-1)a_j}+\frac{c_0B^{p}_{j-1}}{(a_j-1)}\leq 2c_0B^{p}_{j-1}\leq 2c_0c^{p}_1A^{pq}_{j-1}\ , j\geq N_0+1\ .
\end{cases}
\end{align}
For $j\leq N_0$, we take $\ep_0$ small enough such that for any $\ep\in(0, \ep_0)$, we have 
\beeq
\label{805}
(b_j +1)B_j+(a_j +1)A_j \leq \frac{\ep}{2c_0c_1^{p}+2c_1c_0^{q}+1}\ .
\eneq
Combine \eqref{805} and \eqref{8051}, we get \eqref{8052}. 

For the coefficient $B_j$, it is easy to see that
$$
B_j=\frac{c_1c^{q}_0B_{j-1}^{pq}}{\big((a_j-1)a_j\big)^{q}(b_j-1)b_j}\geq\frac{c_1c^{q}_0B_{j-1}^{pq}}{\big(p b_j+2\big)^{2q+2}}\geq \frac{MB_{j-1}^{pq}}{(pq)^{j(2q+2)}}\ ,
$$
where
$$M=c_1c^{q}_0\Big(2+\frac{n+1}{2}p+\frac{1+pq+2p}{pq-1}\Big)^{-(2q+2)}\ .$$
Hence we get 
\begin{align*}
\ln B_j &\geq (pq)\ln B_{j-1} +\ln M-j(2q+2)\ln (pq)\\
&\geq (pq)^2\ln B_{j-2}+(pq+1)\ln M-\big(pq(j-1)+j\big)(2q+2)\ln (pq)\\
&\geq (pq)^{j-1}\ln B_1-(pq)^{j-1}\sum^{j-1}_{k=1}\frac{(k+1)(2q+2)\ln (pq)-\ln M}{(pq)^{k}}\ .
\end{align*}
Note that the series is convergence, that is
$$S(\infty)=\sum^{\infty}_{k=1}\frac{(k+1)(2q+2)\ln (pq)+|\ln M|}{(pq)^{k}}<\infty\ .$$
Then we have 
$$\ln B_j\geq (pq)^{j-1}(\ln B_1-S(\infty))\ .$$
Thus 
\begin{align*}
G&\geq B_{j}(t+1)^{b_j}\\
&\geq e^{(pq)^{j-1}\Big(\ln B_1-S(\infty)+\big(\frac{2q+2}{pq-1}+q+2-\frac{n-1}{2}pq\big)\ln(t+1)\Big)}e^{(n-\frac{2q+2}{pq-1})\ln (t+1)}\ , \forall j\in \N\ ,
\end{align*}
which yields the lifespan estimate
$$T_{\ep} \les\ \ep^{-\big(\frac{q+2+p^{-1}}{pq-1}-\frac{n-1}{2}\big)^{-1}}\ \ .$$ 
Similarly, if we assume $p\geq q$ that is 
$$\Ga_{SS}(p, q, n)=\frac{q^{-1}+p+2}{pq-1}-\frac{n-1}{2}>0\ ,$$
and start the iteration with $B_1=\ep^q\ , b_1=n+1-\frac{n-1}{2}q$, we will have that 
$$T_\ep \les\ \ep^{-\big(\frac{p+2+q^{-1}}{pq-1}-\frac{n-1}{2}\big)^{-1}}\ .$$
Hence, we obtain the lifespan estimate
$$T_\ep \ \les \ \ep^{-\Ga_{SS}(p,q,n)^{-1}}\ .$$

\section{Proof of Theorem \ref{thm2}}

Without loss of generality, we assume $0<c\leq1$. In this case, we see that $D_1\subset D_2$. We first define functionals 
\beeq
\label{dl1}
F(t)=\int_{D_2} \big(u_t\psi_1+c\la u\psi_1\big)dv_{\gm}\ , G(t)=\int_{D_2} \big(v_t\psi_2+\la v\psi_2\big)dv_{\gm}\ .
\eneq
Then by the system \eqref{1}, we have 
\beeq\label{813}
F'(t)=\int_{D_2} |v_t|^{p}\psi_1 dv_{\gm}\ , \ G'(t)=\int_{D_2} |u_t|^{q}\psi_2 dv_{\gm}\ .
\eneq
In order to relate $F'$ with $G$, we let $H_1(t)=\int_{D_2} u_t \psi_1 d v_{\gm}$ and it is easy to check that 
$$(2H_1-F)'+2c\la (2H_1-F)=\int_{D_2} |v_t|^{p}\psi_1 d v_{\gm}\geq 0\ , $$
thus by the assumption on the initial data \eqref{807}, we have 
$$(2H_1-F)(t)\geq (2H_1-F)(0)\geq 0\ ,$$ 
which yields 
$$\frac{F}{2}\leq H_1(t)=\int_{D_2} u_t\psi_1dv_{\gm}\ .$$
Similarly, let $H_2=\int_{D_2} v_t\psi_2 dv_{\gm}$, we can get 
$$ \frac{G}{2}\leq H_2(t)= \int_{D_2} v_t\psi_2 dv_{\gm} \ .$$
Recall that the support of $u$ is on $D_1$ by \eqref{fspyl}, so $H_1(t)=\int_{D_1} u_t \psi_1 d v_{\gm}$. 
By applying H\"older's inequality to $H_1$ and $H_2$, we have
\begin{align*}
\int_{D_{2}} |v_t|^{p}\psi_1dv_{\gm} \gtrsim \frac{\Big(\int_{D_2} v_t\psi_2 dv_{\gm}\Big)^{p}}{\Big(\int_{D_2}  \psi_{1}^{-\frac{p'}{p}}\psi^{p'}_2 dv_{\gm}\Big)^{p/p'}} \gtrsim\frac{H_{2}^p}{\Big(\int_{D_2}  \psi_{1}^{-\frac{p'}{p}}\psi^{p'}_2 dv_{\gm}\Big)^{p/p'}}\ , 
\end{align*}
\begin{align*}
\int_{D_{1}} |u_t|^{q}\psi_2dv_{\gm} \gtrsim \frac{\Big(\int_{D_1} u_t\psi_1 dv_{\gm}\Big)^{q}}{\Big(\int_{D_1}  \psi_{2}^{-\frac{q'}{q}}\psi^{q'}_1 dv_{\gm}\Big)^{q/q'}}\gtrsim 
\frac{H_{1}^q}{\Big(\int_{D_1}  \psi_{2}^{-\frac{q'}{q}}\psi^{q'}_1 dv_{\gm}\Big)^{q/q'}} \ .
\end{align*}
 \subsection{$c=1$}
When $c=1$, we have $D=D_1=D_2$, by Lemma \ref{le1}, 
$$\Big(\int_{D_2}  \psi_{1}^{-\frac{p'}{p}}\psi^{p'}_2 dv_{\gm}\Big)^{p/p'}\gtrsim (t+1)^{(n-1)(p-1)/2}\ ,$$
$$\Big(\int_{D_2}  \psi_{2}^{-\frac{q'}{q}}\psi^{q'}_1 dv_{\gm}\Big)^{q/q'} \gtrsim (t+1)^{(n-1)(q-1)/2}\ .$$
Thus we get the following ODE system 
\beeq
\label{die4}
\begin{cases}
F'(t) \geq \frac{c_0|G|^p}{(t+1)^{\frac{(n-1)(p-1)}{2}}}\ , F(0)\geq c_0\ep\  \\
\\
G'(t) \geq \frac{c_1|F|^q}{(t+1)^{\frac{(n-1)(q-1)}{2}}}\ , G(0)\geq c_1\ep \ . \\
\end{cases}
\eneq
where constants $c_0$ and $c_1$ depends on $p, q, c, \la, n$. We will see that \eqref{die4} blows up in $\Ga(p, q, n)>0$. In fact, it is easy to see $F(t)\geq F(0)\geq c_0\ep$ due to $F' \geq 0$. By putting it into the second inequality in \eqref{die4}, we have
\beeq
\label{die5}
G'(t)\geq c_1(c_0\ep)^{q}(t+1)^{-\frac{(n-1)(q-1)}{2}} \ ,
\eneq
thus $G(t) \gtrsim (t+1)^{1-(n-1)(q-1)/2}$ and by \eqref{die4} again, we get that
$$F'(t) \gtrsim (t+1)^{\big(1-\frac{(n-1)(q-1)}{2}\big)p-\frac{(n-1)(p-1)}{2}}\ .$$
Heuristically, the lower bound of $F(t)$ could be improved if 
$$
\Big(1-\frac{(n-1)(q-1)}{2}\Big)p-\frac{(n-1)(p-1)}{2}+1>0\ ,
$$
similarly, the lower bound of $G(t)$ could be improved if 
$$
\Big(1-\frac{(n-1)(p-1)}{2}\Big)q-\frac{(n-1)(q-1)}{2}+1>0\ ,
$$
that is the range $\Ga_{GG}(p, q, n)>0$, which suggests the blow up result.

To obtain the lifespan estimates, we explore iteration argument and assume
\beeq
\label{dl2}
F(t)\geq A_j (t+1)^{a_j}\ , G(t)\geq B_j (t+1)^{b_j}\ , j\in \N\ .
\eneq
Without loss of generality, we assume $\frac{(n-1)(q-1)}{2}<1$, then by \eqref{die4}, we get that 
\begin{align*}
G(t)\geq &\frac{c_0(c_1\ep)^{q}}{1-\frac{(n-1)(q-1)}{2}}(t+1)^{1-\frac{(n-1)(q-1)}{2}} -\frac{c_0(c_1\ep)^{q}}{1-\frac{(n-1)(q-1)}{2}}+c_1 \ep \\
\geq&\frac{c_0(c_1\ep)^{q}}{1-\frac{(n-1)(q-1)}{2}}(t+1)^{1-\frac{(n-1)(q-1)}{2}}\ , \ t> 0\ ,\end{align*}
if we take $\ep$ small enough. We start the iteration with 
$$B_1=\frac{c_0(c_1\ep)^{q}}{1-\frac{(n-1)(q-1)}{2}}\ , \ b_1=1-\frac{(n-1)(q-1)}{2}\ ,$$
and assume 
$$\Ga_{GG}(p, q, n)=\frac{p+1}{pq-1}-\frac{n-1}{2}>0\ .$$
By putting it to \eqref{die4}, we have 
$$F'(t)\geq c_0 B^{p}_1(t+1)^{pb_1-\frac{(n-1)(p-1)}{2}}\ ,$$
that is 
$$F(t)\geq \frac{c_0 B^{p}_1}{1+pb_1-\frac{(n-1)(p-1)}{2}}(t+1)^{1+pb_1-\frac{(n-1)(p-1)}{2}}\ , t>0\ ,$$
if we take $\ep$ small enough. Then we have 
$$A_1=\frac{c_0 B^{p}_1}{1+pb_1-\frac{(n-1)(p-1)}{2}}\ ,\  a_1=1+pb_1-\frac{(n-1)(p-1)}{2}\ .$$
Following this way, we have the iteration equation 
\beeq
\label{s1}
\begin{cases}
A_j=\frac{B^{p}_{j}}{1+pb_j-\frac{(n-1)(p-1)}{2}}\ , \ a_j=1+pb_j-\frac{(n-1)(p-1)}{2}\ , \ j\geq 1\ ,
\\
\\
B_{j+1}=\frac{A^{q}_j}{1+qa_j-\frac{(n-1)(q-1)}{2}}\ ,\ b_{j+1}=1+qa_j-\frac{(n-1)(q-1)}{2}\ , \ j\geq 1\ ,
\end{cases}
\eneq
If we have 
\beeq
\label{yizhi1}
A_j\leq c_0\ep\ , B_j\leq c_1\ep\ , \forall \ j\in \N\ .
\eneq
In fact, by \eqref{s1} we have
$$a_j=\Big(\frac{p+1}{pq-1}-\frac{n-1}{2}\Big)\big((pq)^{j}-1\big)\ , j\in \N\ ,$$
and $a_j \to \infty$ as $j\to \infty$. Thus there exist a $N_0>0$, such that for any $j\geq N_0 +1$, 
$$
A_j =\frac{A^{pq}_{j-1}}{a_j b_{j}^p}\leq A^{pq}_{j-1}\ ,  B_j =\frac{B^{pq}_{j-1}}{a_{j-1}^q b_{j}}\leq B^{pq}_{j-1}\ . $$
For $j\leq N_0$, we take $\ep_0\ll1$ small enough to get 
$$A_j\leq c_0\ep\ , \ B_j\leq c_1\ep\ , j\leq N_0\ , $$
for any $\ep\in (0, \ep_0)$. 

For the sequence $A_j$, by definition we have that
$$A_j=\frac{A^{pq}_{j-1}}{a_j b_{j}^p}\geq \frac{A^{pq}_{j-1}}{K(pq)^{j(p+1)}}\ , K=\Big(q\big(\frac{p+1}{pq-1}-\frac{n-1}{2}\big)+1\Big)^{p+1}\ .$$
Hence we get 
\begin{align*}
\ln A_j &\geq (pq)\ln A_{j-1} -\ln K-j(p+1)\ln (pq)\\
&\geq (pq)^2\ln A_{j-2}-(pq+1)\ln K-\big(pq(j-1)+j\big)(p+1)\ln (pq)\\
&\geq (pq)^{j-1}\ln A_1-(pq)^{j-1}\sum^{j-1}_{k=1}\frac{\ln K+(k+1)(p+1)\ln (pq)}{(pq)^{k}}\ .
\end{align*}
Note that the series is convergence, that is
$$S(\infty)=\sum^{\infty}_{k=0}\frac{\ln K+(k+1)(p+1)\ln (pq)}{(pq)^{k-1}}<\infty\ .$$
Then we have 
$$\ln A_j\geq (pq)^{j-1}(\ln A_1-S(\infty))\ .$$
Thus 
\begin{align*}
F&\geq A_{j}(t+1)^{a_j}\\
&\geq e^{(pq)^{j-1}\Big(\ln A_1-S(\infty)+\big(\frac{p+1}{pq-1}-\frac{n-1}{2}\big)pq\ln(t+1)\Big)}e^{-\big(\frac{p+1}{pq-1}-\frac{n-1}{2}\big)\ln (t+1)}\ ,
\end{align*}
we can see that
$$\ln A_1-S(\infty)+\big(\frac{p+1}{pq-1}-\frac{n-1}{2}\big)pq\ln(t+1)\leq 0\ ,$$
otherwise, $F(t)\to \infty$ as $j\to \infty$. From which we have the estimate of $T_\ep$
$$T_{\ep}\ \les \ \ep^{-\big(\frac{p+1}{pq-1}-\frac{n-1}{2}\big)^{-1}} \ .$$
Moreover, when $\Ga_{GG}(p, q, n)=\frac{q+1}{pq-1}-\frac{n-1}{2}>0$, we use the following iteration equation 
\beeq
\nonumber
\begin{cases}
A_{j+1}=\frac{B^{p}_{j}}{a_{j+1}}\ , \ a_{j+1}=1+pb_j-\frac{(n-1)(p-1)}{2}\ , \ j\geq 1\ ,
\\
\\
B_{j}=\frac{A^{q}_j}{b_{j}}\ ,\ b_{j}=1+qa_j-\frac{(n-1)(q-1)}{2}\ , \ j\geq 1\ ,
\end{cases}
\eneq
with
$$A_1=\frac{c^{p}_0c_1\ep}{1-\frac{(n-1)(p-1)}{2}}\ , \ a_1=1-\frac{(n-1)(p-1)}{2}\ .$$
And we can obtain another lifespan estimate
$$T_{\ep}\ \les \ \ep^{-\big(\frac{q+1}{pq-1}-\frac{n-1}{2}\big)^{-1}} \ .$$
In summary, we have 
\beeq
\label{dl3}
T_{\ep}\ \les \min\{ \ep^{-\big(\frac{p+1}{pq-1}-\frac{n-1}{2}\big)^{-1}} \ ,   \ep^{-\big(\frac{q+1}{pq-1}-\frac{n-1}{2}\big)^{-1}}\} \ .
\eneq
\subsection{$0<c< 1$}
By Lemma \ref{le1}, we have 
\beeq
\label{902}
\begin{cases}
\Big(\int_{D_2}  \psi_{1}^{-\frac{p'}{p}}\psi^{p'}_2 dv_{\gm}\Big)^{p/p'}\ \les \  e^{\la(c-1)t}(t+1)^{(n-1)(p-1)/2}\ ,\\

\Big(\int_{D_1}  \psi_{2}^{-\frac{q'}{q}}\psi^{q'}_1 dv_{\gm}\Big)^{q/q'}\  \les \ e^{\la (1-c)t}(t+1)^{(n-1)(q-1)/2}\ .
\end{cases}
\eneq
Thus we get the following ODE system 
\beeq
\label{die4108}
\begin{cases}
F'(t) \geq \frac{c_0|G|^p}{e^{\la(c-1)t}(t+1)^{\frac{(n-1)(p-1)}{2}}}\ , \ F(0)\geq c_0\ep\  \\
\\
G'(t) \geq \frac{c_1|F|^q}{e^{\la (1-c)t}(t+1)^{\frac{(n-1)(q-1)}{2}}}\ , \ G(0)\geq c_1\ep \  \\
\end{cases}
\eneq
We will see that the system \eqref{die4108} blows up in finite time when $\widetilde{\Ga}(p, q, n)>0$.  For that purpose, we set $G=H(t)e^{\la(c-1)t/p}$, then we have 
\beeq\label{die815}
F'(t) \geq \frac{c_0|H|^p}{(t+1)^{\frac{(n-1)(p-1)}{2}}}\ ,
H'+\frac{\la(c-1)}{p}H \geq \frac{c_1|F|^q}{(t+1)^{\frac{(n-1)(q-1)}{2}}}e^{\la(c-1)t/p'}\ .
\eneq
It is easy to see that $H(t)\geq H(0) =G(0)\geq c_1\ep$. By putting it into the first inequality in \eqref{die815}, we have
$$F(t) \gtrsim (t+1)^{1-(n-1)(p-1)/2}\ ,$$ 
then by \eqref{die815} again, we get that
\begin{align*}&e^{\la(c-1)t/p}H\\ 
\gtrsim &\int^{t}_{t/p}e^{\la (c-1)\tau}(\tau+1)^{\big(1-\frac{(n-1)(p-1)}{2}\big)q-\frac{(n-1)(q-1)}{2}}d\tau\ ,\\
\gtrsim &(t+1)^{\big(1-\frac{(n-1)(p-1)}{2}\big)q-\frac{(n-1)(q-1)}{2}}\int^{t}_{t/p}e^{\la (c-1)\tau}d\tau\\
\gtrsim &(t+1)^{\big(1-\frac{(n-1)(p-1)}{2}\big)q-\frac{(n-1)(q-1)}{2}}e^{\la(c-1)t/p}\Big(1-e^{\la(c-1)t/p'}\Big)\ ,
\end{align*}
Heuristically, the lower bound of $H(t)$ could be improved if 
$$
\Big(1-\frac{(n-1)(p-1)}{2}\Big)q-\frac{(n-1)(q-1)}{2}>0\ ,
$$
that is  
$$\frac{q}{pq-1}>\frac{n-1}{2}\ .$$

While for the case $\frac{p}{pq-1}>\frac{n-1}{2}$, we shall give another intuition. Let $H(t)=e^{\la(c-1)t}F(t)$, then system \eqref{die4108} becomes
\beeq
\label{die4100}
\begin{cases}
H'+\la(1-c)H\geq \frac{c_0|G|^p}{(t+1)^{(n-1)(p-1)/2}}\ , H(0) \gt \ep\  \\
\\
G'(t) \geq \frac{H^q}{(t+1)^{(n-1)(q-1)/2}}\ , \ G(0)\geq c_1\ep \  
\end{cases}
\eneq
By putting $H\gt \ep$ into the second inequality in \eqref{die4100}, we have 
$$G' \gtrsim \ \ep^{q}(t+1)^{-\frac{(n-1)(q-1)}{2}}\ ,$$
thus $$G \gtrsim \ \ep^q (t+1)^{1-\frac{(n-1)(q-1)}{2}}\ .$$
And then by the first inequality in \eqref{die4100}, we get that
$$(e^{\la(1-c)t}H)' \gt \ep^{pq}e^{\la(1-c)t}(t+1)^{\big(1-\frac{(n-1)(q-1)}{2}\big)p-\frac{(n-1)(p-1)}{2}}\ ,$$
thus 
\begin{align*}H(t) \gt &\ep^{pq}(t+1)^{\big(1-\frac{(n-1)(q-1)}{2}\big)p-\frac{(n-1)(p-1)}{2}}e^{\la(c-1)t}\int^{t}_{t/2}e^{\la(1-c)\tau}d\tau\ ,\\
\gt &\ep^{pq}(t+1)^{\big(1-\frac{(n-1)(q-1)}{2}\big)p-\frac{(n-1)(p-1)}{2}}\ ,
\end{align*}
so the lower bound of $H$ could be boosted when 
 $$\big(1-\frac{(n-1)(q-1)}{2}\big)p-\frac{(n-1)(p-1)}{2}>0\ ,$$
 which is actually 
 $$\frac{p}{pq-1}>\frac{n-1}{2}\ .$$

To get the upper bound of lifespan estimates, we explore iteration argument. Without loss of generality, we assume $p\geq q$ and in this case
$$\Ga_{GG}^*(p, q, n)=\frac{p}{pq-1}>0\ .$$
 We assume
$$H(t)\geq A_j (t+1)^{a_j}\ , G(t)\geq B_j (t+1)^{b_j}\ , j\in \N\ .$$
Under this assumption, the first inequality of \eqref{die4100} could be simplified 
as 
\begin{align*}
H(t)\geq &e^{-\la(1-c) t}\int^{t}_{t/2}\frac{c_0e^{\la(1-c) \tau}G^q(\tau)}{(\tau+1)^{(n-1)(p-1)/2}}d\tau+H(0)\\
\geq &\frac{c_1(G(t)/2)^p}{(t+1)^{(n-1)(p-1)/2}}\Big(\frac{1-e^{-\la(1-c) t}}{2\la(1-c)}\Big)
\geq \frac{c_2(G(t)^p}{(t+1)^{(n-1)(p-1)/2}}\ ,
\end{align*}
for some constant $c_2(c_1, \la, p)>0$ and $t\geq 2$. And the system \eqref{die4100} reads
\beeq
\label{die41000}
\begin{cases}
H\geq \frac{c_2 |G|^p}{(t+1)^{(n-1)(p-1)/2}}\ , H(0) \geq c_2\ep\ , \\
\\
G'(t) \geq \frac{c_1H^q}{(t+1)^{(n-1)(q-1)/2}}\ , \ G(0)\geq c_1\ep \ .  
\end{cases}
\eneq
Then by the same way in case $c=1$, there exists $\ep_0>0$ such that for any $\ep\in(0, \ep_0)$, we have the iteration equation 
\beeq
\label{s10}
\begin{cases}
A_{j+1}=c_2B^{p}_{j}\ , \ a_{j+1}=pb_j-\frac{(n-1)(p-1)}{2}\ , j\geq 1\ ,
\\
\\
B_{j}=\frac{c_1A^{q}_j}{b_j}\ ,\ b_{j}=1+qa_j-\frac{(n-1)(q-1)}{2}\ , j\geq 1\ ,
\end{cases}
\eneq
with 
$$A_1=c_2\ep\ , a_1=0\ .$$
Hence we get that 
\begin{align*}
a_j=&\Big(\frac{p}{pq-1}-\frac{n-1}{2}\Big)(pq)^{j-1}-\Big(\frac{p}{pq-1}-\frac{n-1}{2}\Big)\ , j\in \N\ ,\\
A_j=&c_2B^{p}_{j-1}=c^p_{1}c_2\frac{A^{pq}_{j-1}}{b^{p}_{j-1}}\geq K\frac{A^{pq}_{j-1}}{(pq)^{(j-1)p}}\ ,K=c^p_{1}c_2\big(\frac{p}{pq-1}-\frac{n-1}{2}+1\big)^{-p}\ .
\end{align*}
Then we see that 
\begin{align*}
\ln A_j\geq &pq\ln A_{j-1}-(j-1)p\ln(pq)+\ln K\\
&\geq (pq)^2\ln A_{j-2}-p\ln (pq)\big(pq(j-2)+(j-1)\big)+\big(pq+1\big)\ln K\\
&\geq(pq)^{j-1}\ln A_1+(pq)^{j-1}\sum^{j-1}_{m=1}\frac{\ln K-mp\ln (pq)}{(pq)^{m}}\ .
\end{align*}
Note that the series 
$$S(\infty)=\sum^{\infty}_{m=1}\frac{|\ln K|+mp\ln (pq)}{(pq)^{m}}<\infty\ ,$$
thus we get that 
$$\ln A_j\geq(pq)^{j-1}(\ln A_1-S(\infty))\ . $$
Hence we have the lower bound estimate of $H(t)$
\begin{align*}
H(t)&\geq A_j (t+1)^{a_j}=e^{\ln A_j+a_j\ln(t+1)}\\
&\geq e^{(pq)^{j-1}\Big(\ln A_1-S(\infty)+(\frac{p}{pq-1}-\frac{n-1}{2})\ln (t+1)\Big)}e^{-(\frac{p}{pq-1}-\frac{n-1}{2})\ln(t+1)}\ , \forall \  j\in\N\ ,
\end{align*}
which yields the finite time blow up phenomenon if 
$$\Big(\ln A_1-S(\infty)+(\frac{p}{pq-1}-\frac{n-1}{2})\ln(t+1)\Big)>0\ ,$$
so we get the lifespan estimate 
$$T_{\ep} \les\ \ep^{-(\frac{p}{pq-1}-\frac{n-1}{2})^{-1}}\ .$$

\section{Proof of Theorem \ref{thm3}}
\label{sec5}

Without loss of generality, we assume $0<c\leq1$. In this case, we see that $D_1\subset D_2$. 

\subsection{Blow up for $\Ga_{SG}(p, q, n)>0$.}

In this case, we define functionals
 $$F(t)=\int_{D_2} u_t dv_{\gm}\ , \ G(t)=\int_{D_2} v dv_{\gm}\ .$$
Then by the system \eqref{3}, we have
$$F'(t)=\int_{D_2} |v|^{p}dv_{\gm}\ , G''(t)=\int_{D_2} |u_t|^{q}dv_{\gm}\ .$$
To connect $F'$, $G''$ with $F$, $G$, we apply H\"older's inequality to get the following ODE system 
\beeq
\label{die80}
F'\geq c_0\frac{G^p}{(t+1)^{n(p-1)}}\ , 
G''\geq c_1 \frac{F^q}{(t+1)^{n(q-1)}}\ ,
\eneq
for some constants $c_0, c_1>0$. By the assumption \eqref{732} on the initial data, we have the lower bound of $F$, $G$
\beeq
F(t)\gt \ep\ ,  G\gt \ep(t+1), \ t>0\ .
\eneq

By the same procedure in the proof of Theorem \ref{thm4},  
the lower bound of $F$, $G$ could be improved to
\beeq
\label{die70}
F(t) \gtrsim   \ep^p (t+1)^{n-\frac{n-1}{2}p}\ , G(t)\gtrsim  \ep^q (t+1)^{n+1-\frac{n-1}{2}q}\ , t>0\ ,
\eneq
under the condition $n-\frac{n-1}{2}p>0\ , n-\frac{n-1}{2}q>0$, by auxiliary functions
$$H_1(t)=\int_{D_1} u_t\psi_1 dv_{\gm}\ , \ H_2(t)=\int_{D_2} v\psi_2 dv_{\gm}\ .$$
In fact, by \eqref{3}, we have 
$$H''_2+2\la H'_{2}=\int |u_t|^{q}\psi_2 dv_{\gm}\geq 0\ ,$$
thus by the assumption \eqref{732} on the initial data , 
$$H_2'(t)\geq e^{-2\la t}H_2'(0)\geq 0\ , H_2(t)\geq H_2(0) \gtrsim \ep\ .$$
By applying H\"older's inequality and Lemma \ref{le1}, we get
\begin{align*}
\ep\les H_2\les &\Big(\int_{D_2} |v|^{p}dv_{\gm}\Big)^{1/p}\Big(\int_{D_2} |\psi_2|^{p'}dv_{\gm}\Big)^{1/p'}\\
\les &\Big(\int_{D_2} |v|^{p}dv_{\gm}\Big)^{1/p}\Big((t+1)^{n-1-\frac{n-1}{2}p'}\Big)^{1/p'}\ .
\end{align*}
Then
$$F'=\int_{D_2} |v|^{p} dv_{\gm}\gtrsim \ep^p (t+1)^{n-1-\frac{n-1}{2}p}\ ,$$
thus we could integrate the 
above inequality get the lower bound of $F$ in \eqref{die70}. 

For the lower bound of auxiliary function $H_1(t)$, we need another functional $L(t)=\int_{D_1} \big(u_t\psi_1+\la u\psi_1\big)dv_{\gm}$, which appeared in the proof of Theorem \ref{thm2}. It is easy to check that 
$$(2H_1-L)'+2\la c(2H_1-L)=\int_{D_1} |v|^{p}\psi_1 d v_{\gm}\geq 0\ , \ L'=\int |v|^{p}\psi_1 d v_{\gm}\geq 0\ ,$$
since $\supp u\subset D_1$. 
Thus by the assumption on the initial data \eqref{732}, we have 
$$(2H_1-L)(t)\geq (2H_1-L)(0)\gt \ep\ ,\ L(0)\geq 0\ , $$ 
which yields 
$$ H_1(t)\geq \frac{L(t)}{2}+\frac{1}{2}(2H_1-L)(0)\gt \ep\ \ .$$
By applying H\"older's inequality and Lemma \ref{le1}, we get
\begin{align*}
\ep\les H_1\les &\Big(\int_{D_1} |u_t|^{q}dv_{\gm}\Big)^{1/q}\Big(\int_{D_1} |\psi_1|^{q'}dv_{\gm}\Big)^{1/q'}\\
\les &\Big(\int_{D_1} |u_t|^{q}dv_{\gm}\Big)^{1/q}\Big((t+1)^{n-1-\frac{n-1}{2}q'}\Big)^{1/q'}\ ,
\end{align*}
and 
$$ G''=\int_{D_2} |u_t|^{q}dv_{\gm}\geq \int_{D_1} |u_t|^{q}dv_{\gm}\gtrsim \ep^q (t+1)^{n-1-\frac{n-1}{2}q}\ .$$
Hence, we could integrate the 
above inequality twice to get \eqref{die70}. 

Heuristically, 
the lower bound of $F$ in \eqref{die70} could be improved by the ODE system \eqref{die80} if 
$$\Big(\big(n-\frac{n-1}{2}p\big)q-n(q-1)+2\Big)p-n(p-1)+1> n-\frac{n-1}{2}p\ ,$$
that is the region 
\beeq
\label{811}
\frac{2+p^{-1}}{pq-1}>\frac{n-1}{2}\ .
\eneq
And the lower bound of $G$ in \eqref{die70} could be improved by \eqref{die80} if $$\Big(\big(n+1-\frac{n-1}{2}q\big)p-n(p-1)+1\Big)q-n(q-1)+2> n+1-\frac{n-1}{2}q\ ,$$
that is the region 
\beeq
\label{812}
\frac{p+1+q^{-1}}{pq-1}>\frac{n-1}{2}\ .
\eneq
Combine \eqref{811} and \eqref{812}, we could obtain the finite time blow up result on the range 
$$\max\{\frac{2+p^{-1}}{pq-1}\ , \frac{p+1+q^{-1}}{pq-1}\}-\frac{n-1}{2}>0\ .$$
 In the following, we only give the the upper bound lifespan estimate when 
 $$\frac{p+1+q^{-1}}{pq-1}-\frac{n-1}{2}>0\ ,$$
since the other case follow the same way. In the same way as in the proof of Theorems \ref{thm4}-\ref{thm2}, we have that there exists $\ep_0>0$, such that for any $\ep\in(0, \ep_0)$, we could assume the lower bound of $F$, $G$ by some sequences
$$F\geq A_j(t+1)^{a_j}\ , G\geq B_j(t+1)^{b_j}\ , j\in\N\ .$$
We start the iteration with 
\beeq 
\label{k4}
B_1=\ep^q, \ b_1=n+1-\frac{n-1}{2}q\ ,
\eneq and by \eqref{die80} we have the relation
 \begin{align}
 \label{k1}
\begin{cases}
A_j =\frac{c_0B^{p}_{j-1}}{a_j}\ , a_j =pb_{j-1}-n(p-1)+1\ , j\geq 2\\
\\
 B_{j}=\frac{c_1A^{q}_j}{(b_j-1)b_j}\ , b_{j}=q a_j-n(q-1)+2\ , j\geq 2
\end{cases}
\end{align}
Then we get that
$$
b_j=\Big( \frac{p+1+q^{-1}}{pq-1}-\frac{n-1}{2}\Big)q(pq)^{j-1}+n-\frac{q+2}{pq-1}\ .$$
 For the coefficient $B_j$, it is easy to see that
$$
B_j=\frac{c_1c^{q}_0B_{j-1}^{pq}}{\big(a_j\big)^{q}(b_j-1)b_j}\geq\frac{c_1c^{q}_0B_{j-1}^{pq}}{\big(p b_j+1\big)^{q+2}}\geq \frac{MB_{j-1}^{pq}}{(pq)^{j(q+2)}}
$$
where
$$M=c_1c^{q}_0\Big( \frac{p+1+q^{-1}}{pq-1}+\frac{q+2}{pq-1}+n+1\Big)^{-(q+2)}\ .$$
Hence we get 
\begin{align*}
\ln B_j &\geq (pq)\ln B_{j-1} +\ln M-j(q+2)\ln (pq)\\
&\geq (pq)^2\ln B_{j-2}+(pq+1)\ln M-\big(pq(j-1)+j\big)(q+2)\ln (pq)\\
&\geq (pq)^{j-1}\ln B_1-(pq)^{j-1}\sum^{j-1}_{k=1}\frac{(k+1)(q+2)\ln (pq)-\ln M}{(pq)^{k}}
\end{align*}
Note that the series is convergence, that is
$$S(\infty)=\sum^{\infty}_{k=1}\frac{(k+1)(q+2)\ln (pq)+|\ln M|}{(pq)^{k}}<\infty\ .$$
Then we have 
$$\ln B_j\geq (pq)^{j-1}(\ln B_1-S(\infty))\ .$$
Thus 
\begin{align*}
G&\geq B_{j}(t+1)^{b_j}\\
&\geq e^{(pq)^{j-1}\Big(\ln B_1-S(\infty)+ \big(\frac{p+1+q^{-1}}{pq-1}-\frac{n-1}{2}\big)q\ln(t+1)\Big)}e^{(n-\frac{q+2}{pq-1})\ln (t+1)}\ , \forall j\in \N\ ,
\end{align*}
which yields the lifespan estimate
$$T_{\ep} \les\ \ep^{-\big( \frac{p+1+q^{-1}}{pq-1}-\frac{n-1}{2}\big)^{-1}}\ \ .$$ 
Similarly, if we assume 
$$\frac{2+p^{-1}}{pq-1}-\frac{n-1}{2}>0\ ,$$
with the iteration relation
\begin{align}
\label{k2}
\begin{cases}
A_j =\frac{c_0B^{p}_{j}}{a_j}\ , a_j =pb_{j}-n(p-1)+1\ , j\geq 2\\
\\
 B_{j}=\frac{c_1A^{q}_{j-1}}{(b_j-1)b_j}\ , b_{j}=q a_{j-1}-n(q-1)+2\ , j\geq 2
\end{cases}
\end{align}
and 
\beeq
\label{k3}
A_1=\ep^{p}\ , \ a_1=n-\frac{n-1}{2}p\ ,
\eneq
we will have that 
$$T_\ep \les\ \ep^{-\big(\frac{2+p^{-1}}{pq-1}-\frac{n-1}{2}\big)^{-1}}\ .$$
Hence, we obtain the lifespan estimate
$$T_\ep \les \min\{\ep^{-\big(\frac{p+1+q^{-1}}{pq-1}-\frac{n-1}{2}\big)^{-1}}\ , \ \ep^{-\big(\frac{2+p^{-1}}{pq-1}-\frac{n-1}{2}\big)^{-1}}\}\ .$$

\subsection{Blow up for $
\Ga_{GG}^{*}(p, q, n)>0, c\neq 1$ and $\Ga_{GG}(p, q, n)>0, c=1$.}

We define functionals $$F(t)=\int_{D_2} \big(u_t\psi_1+\la cu\psi_1\big)dv_{\gm}\ , H(t)=\int_{D_2}  v\psi_2 dv_{\gm}\ .$$
Then by the system \eqref{1}, we have 
\beeq\label{81300}
F'(t)=\int_{D_2} |v|^{p}\psi_1 dv_{\gm}\ , \ H''(t)+2\la H'=\int_{D_2} |u_t|^{q}\psi_2 dv_{\gm}\ .
\eneq
In order to relate $F'$ with $H$, we let $H_1(t)=\int_{D_2} u_t \psi_1 d v_{\gm}$ and it is easy to check that 
$$(2H_1-F)'+2c\la (2H_1-F)=\int_{D_2} |v|^{p}\psi_1 d v_{\gm}\geq 0\ , $$
thus by the assumption on the initial data , we have 
$$(2H_1-F)(t)\geq (2H_1-F)(0)\geq 0\ ,$$ 
which yields 
$$\frac{F}{2}\leq H_1(t)=\int_{D_2} u_t\psi_1dv_{\gm}\ .$$
By applying H\"older's inequality to $H_1$ and $H$, we have
\begin{align*}
\int_{D_{2}} |v|^{p}\psi_1dv_{\gm} \gtrsim \frac{\Big(\int_{D_2} v\psi_2 dv_{\gm}\Big)^{p}}{\Big(\int_{D_2}  \psi_{1}^{-\frac{p'}{p}}\psi^{p'}_2 dv_{\gm}\Big)^{p/p'}} \gtrsim\frac{H^p}{\Big(\int_{D_2}  \psi_{1}^{-\frac{p'}{p}}\psi^{p'}_2 dv_{\gm}\Big)^{p/p'}}\ , 
\end{align*}
\begin{align*}
\int_{D_{1}} |u_t|^{q}\psi_2dv_{\gm} \gtrsim \frac{\Big(\int_{D_1} u_t\psi_1 dv_{\gm}\Big)^{q}}{\Big(\int_{D_1}  \psi_{2}^{-\frac{q'}{q}}\psi^{q'}_1 dv_{\gm}\Big)^{q/q'}}\gtrsim 
\frac{H_{1}^q}{\Big(\int_{D_1}  \psi_{2}^{-\frac{q'}{q}}\psi^{q'}_1 dv_{\gm}\Big)^{q/q'}} \ .
\end{align*}
 By \eqref{81300} and Lemma \ref{le1}, we get the following ODE system 
\beeq
\label{die410}
\begin{cases}
F'(t) \geq \frac{c_0|H|^p}{e^{\la (c-1)t}(t+1)^{\frac{(n-1)(p-1)}{2}}}\ , \ F(0)\geq c_0\ep\ , \\
\\
H''+2\la H' \geq \frac{c_1|F|^q}{e^{\la (1-c)t}(t+1)^{\frac{(n-1)(q-1)}{2}}}\ , \ H(0)\geq c_1\ep \ ,\\
\end{cases}
\eneq
for some constants $c_0, c_1>0$.

Note that the above system could be reduced to system \eqref{die4108}. In fact, we have
$$(e^{2\la t}H')'=e^{2\la t}(H''(t)+2\la H')\geq \frac{c_1e^{2\la t}F^q}{e^{\la (1-c)t}(t+1)^{(n-1)(q-1)/2}}\ ,$$
thus
\begin{align*}
H'\geq &e^{-2\la t}\int^{t}_{t/2}\frac{c_1e^{2\la \tau}F^q(\tau)}{e^{\la (1-c)\tau}(\tau+1)^{(n-1)(q-1)/2}}d\tau+H'(0)\\
\geq &\frac{c_1(F(t/2)^q}{e^{\la (1-c)t}(t+1)^{(n-1)(q-1)/2}}\big(\frac{1-e^{-\la t}}{2\la}\big)\\
\geq &\frac{c_2(F(t/2)^q}{e^{\la (1-c)t}(t+1)^{(n-1)(q-1)/2}}\ , t\geq 2\ ,
\end{align*}
for some constant $c_2>0$ depends on $c_1$ and $\la$.  Hence we get the blow up result when $\Ga_{GG}^{*}(p, q, n)>0, c\neq 1$ and $\Ga_{GG}(p, q, n)>0, c=1$ as well as the  corresponding lifespan estimates.

\bibliographystyle{plain1}

\end{document}